\newcommand{\fint}{\dashint}
\providecommand{\eps}{{\epsilon}}
\begin{document} 

\title{Global regularity for nonlinear systems with symmetric gradients}%

\author{Linus Behn and Lars Diening}
\address{Linus Behn \\
	Fakult\"{a}t  f\"{u}r Mathematik,
	University of Bielefeld\\
	Universit\"{a}tsstrasse 25, 33615 Bielefeld, Germany}
\email{linus.behn@math.uni-bielefeld.de}

\address{Lars Diening \\
	Fakult\"{a}t  f\"{u}r Mathematik,
	University of Bielefeld\\
	Universit\"{a}tsstrasse 25, 33615 Bielefeld, Germany}
\email{lars.diening@uni-bielefeld.de}

\thanks{The authors gratefully acknowledge financial support by the German Research Foundation through the IRTG 2235 (Project 282638148).}%

\subjclass{%
	35J57,  
	35B65, 	
	74C05,  
	35J60,   
	35B65   
}%
\keywords{symmetric gradient, p-Laplace system, Orlicz growth, plasticity, global regularity, boundary regularity, Young function}%


\begin{abstract}
  We study global regularity of nonlinear systems of partial differential equations  depending on the symmetric part of the gradient with Dirichlet boundary conditions. These systems arise from variational problems in plasticity with power growth. We cover the full range of exponents $p \in (1,\infty)$. As a novelty  the degenerate case for $p>2$ is included.
  We present a unified approach for all exponents by showing the regularity for general systems of Orlicz growth.
\end{abstract}

\maketitle

\section{Introduction}
\label{sec:introduction}

In this paper we investigate the global regularity of solutions of nonlinear elliptic systems arising from problems in plasticity, see e.g.~\cite{FuchsSer00}. In particular, we are interested in systems of the form
\begin{equation}\label{eq:system}
	\begin{aligned}
		-\divergence \big((\delta+|\eps u|)^{p-2}\eps u\big)&= f\qquad \text{in }\Omega \\
		u&= 0 \qquad \text{on }\partial \Omega .		
	\end{aligned}
\end{equation}
Here, $\Omega \subset \RRn$ is a bounded domain, $p\in(1,\infty)$, $\delta \geq 0$, $u:\Omega \rightarrow \RRn$ is a vector field and $\eps u\coloneqq \tfrac 12 (\nabla u + (\nabla u)^T)$ is the symmetric part of $\nabla u$, also abbreviated as \emph{symmetric gradient}. 

Similar systems arise in the context of non-Newtonian fluids (power-law fluids), where additionally a pressure and a incompressibility constraint appears, see e.g.~\cite{MalekNecasRokytaRuzicka1996}.

For $\delta = 0$ our system is also known as  the \emph{symmetric $p$-Laplace system}. Sometimes in the literature the case $\delta=0$ and $1<p<2$ is called \emph{singular} and the case $\delta=0$ and $p>2$ is called \emph{degenerate}.

The system~\eqref{eq:system} arises as the Euler-Lagrange equation of the functional
\begin{align}
  \label{eq:defJphi}
  J_\phi(u)\coloneqq  \int_\Omega \phi(|\eps u|) \,dx - \int_\Omega f u\,dx,
\end{align}
where $\phi$ is the Young function
\begin{align*}
  \phi(t) &= \int_0^t (\delta+s)^{p-2} s\,ds.
\end{align*}
For $\delta=0$ we obtain $\phi(t) = \frac 1p t^p$. In fact, although the system~\eqref{eq:system} is our main interest, we also consider systems arising from more general Young functions~$\phi$.  Our technique has the advantage that we do not have to distinguish in our proofs the cases $p<2$ and $p>2$ as is often the case in the literature.

In the study of our system~\eqref{eq:system} two quantities are of special interest
\begin{align*}
  A(\epsilon u) &\coloneqq  (\delta+\abs{\epsilon u})^{p-2} \epsilon u,
  \\
  V(\epsilon u) &\coloneqq  (\delta+\abs{\epsilon u})^{\frac{p-2}{2}} \epsilon u.
\end{align*}
The former $A(\epsilon u)$ is sometimes called the \emph{stress tensor}. The latter $V(\epsilon u)$ appears naturally in the study of regularity of nonlinear systems with power growth. 
Note that $A$ and $V$ fulfill the relation $A(\epsilon u) : \epsilon u = \abs{V(\epsilon u)}^2$.
Our goal is to establish global regularity properties for solutions $u$ of~\eqref{eq:system}. This is best expressed in terms of~$V(\epsilon u)$. In particular, we will show that $V(\epsilon u) \in W^{1,2}(\Omega)$ up to the boundary. This is the natural regularity when applying the technique of difference quotients which formally corresponds to testing the equation with~$\Delta u$.

We start by giving an overview of already existing results. Let us start with the simpler case of the standard $p$-Laplace system, which depends on the gradient~$\nabla u$ rather than the symmetric gradient~$\eps u$. There the proof of $V(\nabla u) \in W^{1,2}_{\loc}(\Omega)$ is rather classical and can be found e.g. in \cite[Theorem~1.11.1]{Morrey2008}, \cite[Lemma~3.1]{Uhl77} and \cite[Proposition~2.4]{AceFus89} or in~\cite[Theorem~11]{DieEtt08} for Orlicz growth. Global regularity~$V(\nabla u) \in W^{1,2}(\Omega)$ have been established in~\cite{CheDiB89}.

Let us now discuss the known results on regularity for our system~\eqref{eq:system} with symmetric gradients. Interior regularity $V(\epsilon u) \in W^{1,2}_{\loc}(\Omega)$ for $A(\epsilon u)$ roughly of the form $A(\epsilon u) = (1+\abs{\epsilon u})^{p-2} \epsilon u$ with $p \in (1,2)$ has been shown in~\cite{Seregin1992}. The technique using difference quotients is similar to the one for the case of dependence on~$\nabla u$. Only in the case of global regularity $V(\epsilon u) \in W^{1,2}(\Omega)$ the question becomes significantly more difficult.
The first result on global regularity is due to Seregin and Shilkin~\cite{SerShi97} who proved the global regularity $V(\eps u)\in W^{1,2}(\Omega)$ for the case of a flat boundary and for operators of the form $A(\eps u)\coloneqq (\delta +|\eps u| )^{p-2}\eps u$, where $1<p\leq 2$ and $\delta >0$. This result was extended by Berselli and \Ruzicka{}~\cite{BerRuz20} to non-flat boundaries $\partial \Omega$ of class $C^{2,1}$ with $1 < p \leq 2$ and $\delta\geq 0$. The case  $p \geq 2$ but with $\delta >0$ (non-degenerate case) has been proved in~\cite{BerRuz22b}, where also the parabolic situation is investigated. The estimates depend strongly on~$\delta>0$ and deteriorate for~$\delta \to 0$. In this paper we will prove $V(\eps u)\in W^{1,2}(\Omega)$ for all $1<p<\infty$ and $\delta \geq 0$. To include all the possible variants of growth conditions we work in the even more general setting of Orlicz growth. For the special case of $p$-growth our main result is the following Theorem~\ref{thm:p-growth}. The general case can be found in Theorem~\ref{thm:orlicz}.
\begin{theorem}\label{thm:p-growth}
  Let $\Omega$ be a bounded $C^{2,1}$-domain, $1<p<\infty$, $\delta \geq 0$ and $f\in W_0^{1,p^\prime}(\Omega)$. Then the system
  \begin{equation*}
    \begin{aligned}
      -\divergence \big((\delta+|\eps u|)^{p-2}\eps u\big) &=f \qquad \text{in $\Omega$},\\
      u&=0 \qquad\text{on $\partial \Omega$}
    \end{aligned}
  \end{equation*}
  has a unique weak solution $u$ fulfilling $V(\eps u)\coloneqq (\delta+|\eps u|)^{\tfrac{p-2}{2}}\eps u\in W^{1,2}(\Omega)$ with the estimate
  \begin{align*}
    \| V(\eps u)\|^2_{W^{1,2}(\Omega)}\lesssim  \|f\|_{W^{1,p^\prime}}^{p^\prime}.
  \end{align*}
  If $p\leq 2$, this implies $u\in W^{2,\tfrac{np}{n+p-2}}(\Omega)$, and if $p\geq 2$, it implies $u\in W^{1+\tfrac{2}{p},p}(\Omega)$.
\end{theorem}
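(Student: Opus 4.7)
The plan is to deduce this theorem from the Orlicz version (Theorem~\ref{thm:orlicz}) by specialising that result to the Young function $\phi(t)=\int_0^t (\delta+s)^{p-2}s\,ds$, and then to convert the resulting $W^{1,2}$-estimate for $V(\eps u)$ into the stated Sobolev regularity for $u$. First I would verify that $\phi$ meets the structural assumptions of Theorem~\ref{thm:orlicz}: it is a Young function, and both $\phi$ and its conjugate $\phi^*$ satisfy the $\Delta_2$-condition uniformly in $\delta\geq 0$ with constants depending only on $p$; equivalently, the shifted family $\phi_a$ is uniformly of $p$-Laplace type. This delivers the ellipticity and growth needed by the Orlicz machinery, and Theorem~\ref{thm:orlicz} then yields a unique weak solution together with the estimate $\|V(\eps u)\|_{W^{1,2}(\Omega)}^2 \lesssim \|f\|_{W^{1,p^\prime}}^{p^\prime}$. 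Existence and uniqueness alone could also be obtained directly by monotone-operator theory once Korn's inequality has converted coercivity in $\eps u$ into coercivity in $\nabla u$.

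The second step is to translate $V(\eps u)\in W^{1,2}(\Omega)$ into Sobolev regularity of $u$ itself, for which I would use the pointwise relation $|\nabla V(\eps u)|^2 \sim (\delta+|\eps u|)^{p-2}|\nabla\eps u|^2$ together with Korn's inequality. For $1<p\leq 2$, writing $|\nabla\eps u| = (\delta+|\eps u|)^{(2-p)/2}\cdot |(\delta+|\eps u|)^{(p-2)/2}\nabla\eps u|$ and applying H\"older's inequality with exponents $2/q$ and $2/(2-q)$ closes via the Sobolev embedding $V(\eps u)\in L^{2n/(n-2)}$; a short exponent check shows that $q=np/(n+p-2)$ is exactly the critical choice for which both factors are controlled. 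For $p\geq 2$, the scalar map $t\mapsto (\delta+t)^{(p-2)/2}t$ has a $2/p$-H\"older inverse, so a pointwise bound of the form $|\eps u(x)-\eps u(y)|^p \lesssim |V(\eps u(x))-V(\eps u(y))|^2$ lets me estimate the Gagliardo seminorm of $\eps u$ in $W^{2/p,p}$ by $\|V(\eps u)\|_{W^{1,2}}$, and Korn finally gives $u\in W^{1+2/p,p}$.

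The main obstacle lies entirely in Theorem~\ref{thm:orlicz}. The symmetric-gradient structure makes the boundary analysis substantially harder than for the classical $p$-Laplace system with full gradient: after flattening $\partial\Omega$ via a $C^{2,1}$ chart, tangential difference quotients give tangential derivatives of $V(\eps u)$ in $L^2$ in the standard way, but the normal-normal derivative of $u$ does not appear isolated in $\divergence A(\eps u)$ and must be extracted algebraically from the equation together with the already-established tangential estimate. Working with the shifted $N$-functions $\phi_a$ has the additional merit of avoiding a case split between $1<p<2$ and $p>2$ that is typical in the previous literature, so that the degenerate case $\delta=0$, $p>2$ is treated on the same footing as the singular and non-degenerate cases.
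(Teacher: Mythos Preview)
Your overall plan coincides with the paper's: one specialises Theorem~\ref{thm:orlicz} to $\phi(t)=\int_0^t(\delta+s)^{p-2}s\,ds$ to obtain $V(\eps u)\in W^{1,2}(\Omega)$ with the stated bound, and for $p\le 2$ your H\"older/Sobolev-embedding argument with the exponent $q=np/(n+p-2)$ is exactly what the paper carries out.

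For $p>2$ there is a genuine gap in your sketch. From the pointwise inequality $|\eps u(x)-\eps u(y)|^p\lesssim |V(\eps u)(x)-V(\eps u)(y)|^2$ you propose to bound the Gagliardo seminorm
\[
[\eps u]_{W^{2/p,p}}^p=\iint_{\Omega\times\Omega}\frac{|\eps u(x)-\eps u(y)|^p}{|x-y|^{n+2}}\,dx\,dy
\]
by $\|V(\eps u)\|_{W^{1,2}}^2$, but the right-hand side you would obtain is $\iint |V(\eps u)(x)-V(\eps u)(y)|^2/|x-y|^{n+2}\,dx\,dy$, i.e.\ the Slobodeckij seminorm at the \emph{critical} order $s=1$, which is infinite for every non-constant function and is not controlled by $\|\nabla V(\eps u)\|_{L^2}$. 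The difference-quotient variant of the same idea, $\|\tau_h\eps u-\eps u\|_{L^p}^p\lesssim\|\tau_h V(\eps u)-V(\eps u)\|_{L^2}^2\lesssim |h|^2$, yields only $\eps u\in B^{2/p}_{p,\infty}$, strictly weaker than $W^{2/p,p}$. The paper does not attempt a direct argument here; it invokes \cite[Proposition~4.4]{BalDieWei20}, a transfer lemma stating precisely that $V(\eps u)\in W^{1,2}$ implies $\eps u\in W^{2/p,p}$, and then concludes $u\in W^{1+2/p,p}$ via Korn's inequality. Your third paragraph, on the difficulties inside Theorem~\ref{thm:orlicz}, is accurate commentary but is not part of the proof of the present statement.
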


Our assumption of a $C^{2,1}$-regular boundary is the same that is made in~\cite{BerRuz20}. It is a technical assumption that we need because of the way we locally flatten the boundary. In particular, we straighten the boundary locally only in one direction. A more flexible method would allow slightly less regular boundaries.

Also our assumptions on $f$ are in some cases more stringent than necessary. This is due to the fact that $\phi$ is a general N-function and does not need to have $p$ growth for one specific $p$. If it was, e.g., of $p$ growth with $1<p\leq2$, then $f\in L^{p'}(\Omega)$ would suffice. However, for $p>2$ it is not sufficient to assume $f\in L^{p'}$, see~\cite[Section 5]{BraSan18}. It might be possible to reduce $f \in W^{1,p'}$ for $p > 2$ to $W^{s,p'}$ for some $s \in (\frac{p-2}{p},1)$ as done in~\cite{BraSan18}.  The condition that $f$ vanishes at the boundary is needed in order to perform integration by parts without getting any boundary terms and is important for~$p \geq 2$.

Let us make a few additional remarks where the regularity theory of the symmetric $p$-Laplace system lacks behind the one for the $p$-Laplace system involving~$\nabla u$ instead of~$\epsilon u$.  Higher H\"older regularity $u \in C^{1,\alpha}$, in other words $V(\nabla u) \in C^{0,\beta}$, has been shown in~\cite{Uhl77,AceFus89} for the $p$-Laplace system and in~\cite{DieningStroffoliniVerde09} for the corresponding Orlicz growth. Such regularity up to the boundary has been proved in~\cite{CheDiB89} even for parabolic problems.
Such $C^{1,\alpha}$-regularity results are completely open for the symmetric $p$-Laplacian for $n \geq 3$. The case~$n=2$ has been solved in~\cite{KaplickyMalekStara1999,DieningKaplickySchwarzacher2014}, where even the $p$-Stokes system is covered.

Another aspect of regularity is the one of the stress~$A(\nabla u)$ in the case of the $p$-Laplace system.  If $f \in L^{n,1}(\Omega)$ (Lorentz space), then $A(\nabla u) \in L^\infty(\Omega)$, resp $\nabla u \in L^\infty(\Omega)$, has been proved in \cite{CianchiMazya2014bounded,KuusiMingione2018}. If $f = \divergence F$ and $F \in C^{0,\alpha}(\overline{\Omega})$, then $A(\nabla u) \in C^{0,\beta}(\overline{\Omega})$ has been proved in~\cite{BreitCianchiDieningSchwarzacher2022}. 
The maximal regularity $A(\nabla u) \in W^{1,2}(\Omega)$ for the $p$-Laplace system with $f \in L^2(\Omega)$ and $p > 2(2-\sqrt{2}) = 1.171532\dots$ has been shown in~\cite{CiMa_JMPA,BalCiaDieMaz21}. The corresponding result $A(\epsilon u) \in W^{1,2}(\Omega)$ or even $A(\epsilon u) \in W^{1,2}_{\loc}(\Omega)$ is unsolved for the symmetric $p$-Laplace system. The only known result on regularity transfer from the data to the stress can be found in~\cite{DieningKaplicky2013}. There it is shown that if $F \in L^q(\Omega)$ with $p \leq q < 3p$ and $n=3$, then $A(\epsilon u) \in L^q_{\loc}(\Omega)$. This is even shown for the $p$-Stokes system.

Let us make a few final remarks on the related $p$-Stokes system, where an additional pressure and incompressibility constraint appear. Here $V(\epsilon u)\in W^{1,2}(\Omega)$ is only known for systems with quadratic growth. In the language of N-functions this means $\phi''(t) \eqsim 1$, see e.g. \cite{FuchsSer00}. For $1<p\leq 2$ and $\delta>0$ one has $V(\epsilon u) \in W^{1,q}(\Omega)$ for some $q <2$, see~\cite{BerRuz17}.

The rest of the paper is organized as follows. In Section~\ref{sec:object-main-result} we introduce uniformly convex N-functions and formulate our main result in the language of Orlicz spaces. In Section~\ref{sec:basic-prop-weak} we derive Korn's and Caccioppoli's inequalities in our setting. In Section~\ref{sec:global-regularity} we prove the main result. We do this by showing it first for N-functions with quadratic growth (Section~\ref{sec:bound-regul-line}) and then use an approximation argument to prove the general case (Section~\ref{sec:passage-limit}). In the Appendix~\ref{sec:unif-conv-orlicz-appendix} we collect some elementary properties of uniformly convex N-functions that are needed throughout the proofs.

\section{Objective and main result}
\label{sec:object-main-result}

Our main focus is the study of system~\eqref{eq:system}. This fits nicely in the more general framework of systems with Orlicz growth.  In this section we introduce the corresponding partial differential system and the necessary background on Orlicz spaces and N-functions.

\subsection{Uniformly convex Orlicz spaces}
\label{sec:unif-conv-orlicz}

We begin with a bit of standard notation. With $a\lesssim b$ we mean that there exists a constant $c>0$ independent of all important quantities such that $a\leq cb$. We use the notation $a\lor b\coloneqq \max (a,b)$ and $a\land b\coloneqq  \min (a,b)$. For a measurable set $U\subset \RRn$ and $f\in L^1_\loc (\RRn)$ we denote by $|U|$ the Lebesgue measure $\mathcal{L}^n(U)$ and define
\begin{equation*}
  \mean{f}_U\coloneqq \fint_U f(x)\,dx\coloneqq  \dfrac{1}{|U|}\int_U f(x)\,dx.
\end{equation*}
For a ball $B \subset \RRn$ let $2B$ denote the ball with the same center but twice the diameter. Usually, we denote the radius of a ball~$B$ by~$r$.

An N-function is a function $\phi : \setR^{\geq 0} \rightarrow \setR^{\geq 0}$ which has a right-continuous, non-decreasing derivative $\phi^\prime   : \setR^{\geq 0} \rightarrow \setR^{\geq 0}$, such that $\phi^\prime(0)=0$, $\phi^\prime (t)>0$ for $t>0$, $\lim\limits_{t\rightarrow \infty}\phi^\prime (t)=\infty$ and $\phi(t)=\int_0^t\phi^\prime(s)\,ds$ for all $t\geq 0$.

For every N-function $\phi$ we define its conjugate N-function $\phi^\ast$ via
\begin{equation*}
	\phi^\ast(s)\coloneqq  \sup_{t\geq 0}(st-\phi(t)).
\end{equation*}
With this definition, $\phi^\ast$ is again an N-function and we have $\phi^{\ast\ast}=\phi$. If $\phi^\prime$ is increasing we also have $(\phi^\ast)^\prime(s)=(\phi ^\prime)^{-1}(s)$ for all $s\geq 0$.

For an N-function $\phi$ and open set $\Omega \subset \RRn$ we define the Orlicz space $L^\phi(\Omega)$ as the set of all measurable functions $v:\Omega \rightarrow \setR$ such that $\|v\|_{L^\phi(\Omega)}<\infty$, where
\begin{align*}
  \norm{v}_{L^\phi(\Omega)}\coloneqq  \inf \Bigset{ \lambda >0: \int_\Omega \phi\big(\lambda^{-1}\abs{v(x)}\big) \,dx\leq 1}.
\end{align*}
We define the Sobolev-Orlicz space $W^{1,\phi}(\Omega)\coloneqq \lbrace v \in L^\phi(\Omega): |\nabla v|\in L^{\phi}(\Omega) \rbrace$ equipped with the norm $\|v\|_{W^{1,\phi}(\Omega)}\coloneqq \|v\|_{L^\phi(\Omega)}+\|\nabla v\|_{L^\phi(\Omega)}$.

The systems that we study require that $\phi$ fulfills the following condition of uniform convexity, see \cite{DieForTom20}.
\begin{definition}[Uniform convexity]\label{def:uniform_convexity}
  An N-function $\phi$ is called uniformly convex, if $\phi\in C^1([0,\infty))\cap C^2((0,\infty))$ and we have
  \begin{align*}
    p^-&\coloneqq \inf_{t>0}\dfrac{\phi^{\prime\prime}(t)t}{\phi^{\prime}(t)}+1 >1 \\
    p^+&\coloneqq \sup_{t>0}\dfrac{\phi^{\prime\prime}(t)t}{\phi^{\prime}(t)}+1 <\infty.
  \end{align*}
  We call $p^-$ the lower and $p^+$ the upper index (of uniform convexity) of $\phi$.
\end{definition}

\begin{example}
  Standard examples of uniformly convex N-functions are
  \begin{enumerate}
  \item $\phi(t)=\tfrac{1}{p}t^p$ with $p\in (1,\infty)$. Then $p^-=p^+=p$. This example motivates the definition of $p^-$ and $p^+$.
  \item $\phi(t)=\int_0^t (\delta+s)^{p-2} s\,ds \eqsim (\delta+t)^{p-2}t^2$ with $p \in (1,\infty)$ and $\delta > 0$. Then $p^-= p \wedge 2$ and $p^+=p \vee 2$.
  \item $\phi(t)=t^p + t^q$ with $p,q \in (1,\infty)$. Then $p^- = p \wedge q$ and $p^+ = p \vee q$.
  \end{enumerate}
\end{example}

\begin{remark}
	An example for a non uniformly convex $\phi$ is given by
	\begin{equation*}
		\phi(t):=\int_0^t\int_0^s (1+\tau^2)^{-\mu/2}\,d\tau \,ds,
	\end{equation*}
	where $\mu \in (1,\infty)$. In this case we have $p^+=2$ but $p^-=1$. The interior regularity of the corresponding Euler-Lagrange equation with the symmetric gradient (and $f=0$) is studied in \cite{GmeinKristensen19,Gmein20}.
\end{remark}

Uniform convexity can also be defined for slightly less regular N-functions with $\phi' \in W^{1,\infty}_{\loc}((0,\infty))$ instead of~$C^2((0,\infty))$, see \cite[Definition 28]{DieForTom20}. We collected some properties of uniformly convex N-functions in the Appendix, Section~\ref{sec:unif-conv-orlicz-appendix}.

\subsection{Main result}
\label{sec:main-result}

In this section we introduce our system and present our main result on global regularity.

We denote by $\partial_i$ the partial derivative and by $\nabla = (\partial_1, \dots, \partial_n)$ the gradient of either a scalar function, a vector field or a tensor field. If $A:\RRn \rightarrow \RRnn$ is a function then by $\divergence A:\RRn \rightarrow \RRn$ we mean the row-wise divergence of $A$, i.e. $\divergence A = \sum_{k=1}^n \partial_k A_{j,k}$.  For matrices $M,N\in \RRnn$ we define the Kronecker product $A:B\coloneqq \sum_{i,j=1}^n A_{ij}B_{ij}$ and for vectors $u$, $v$ we set $u\otimes v\coloneqq (u_iv_j)_{ij}$.

For a vector field $u:\RRn \rightarrow \RRn$ we define the symmetric gradient $\eps u$ via
\begin{equation*}
  \eps u \coloneqq  \tfrac{1}{2}\big(\nabla u + (\nabla u)^T\big).
\end{equation*}
In the following let $\phi$ be a fixed uniformly convex N-function with indices of uniform convexity $p^-$ and $p^+$. We define the two important quantities $A$ and $V$ as
\begin{align}
  \label{eq:defAV}
  A(P)\coloneqq \phi^\prime(|P|)\dfrac{P}{|P|}\qquad\text{and}\qquad V(P)\coloneqq \sqrt{\phi^\prime(|P|)|P|}\dfrac{P}{|P|},
\end{align}
with $A(P) = V(P)=0$ for $P=0$.  The quantities $A$, $V$ and the N-function $\phi$ are closely related by the following formula
\begin{align}\label{eq:VA_basic1}
  A(P) : P &= \abs{V(P)}^2 \eqsim \phi(\abs{P}).
\end{align}
Moreover, we have 
\begin{align}\label{eq:VA_basic2}
  (A(P)-A(Q)):(P-Q)&\eqsim |V(P)-V(Q)|^2 \eqsim \phi^{\prime \prime}(|P|+|Q|)|P-Q|^2.
\end{align}
In \eqref{eq:VA_basic1} and \eqref{eq:VA_basic2} all implicit constants depend only on $p^-$ and $p^+$. These and other useful equivalences can be found in the Appendix, Section~\ref{sec:unif-conv-orlicz-appendix}.

Our main result is the following.
\begin{theorem}\label{thm:orlicz}
  Let $\Omega \subset \RRn$ be an open and bounded domain with $C^{2,1}$-regular boundary. Let $\phi$ be a uniformly convex N-function and $A$ and $V$ defined as in~\eqref{eq:defAV}. Then for every $f\in W^{1,\phi^\ast}_0(\Omega)$ the weak solution $u\in W^{1,\phi}(\Omega)$ to the system of equations
  \begin{equation} \label{eq:system_in_main_thm}
    \begin{aligned}
      -\divergence\big(A(\epsilon u)\big)&=f\qquad \text{in }\Omega,\\
      u&=0 \qquad \text{on }\partial \Omega
    \end{aligned}
  \end{equation}
  satisfies $V(\eps u)\in W^{1,2}(\Omega)$ together with the estimate
  \begin{equation}\label{eq:bound_in_main_thm}
    \|V(\eps u)\|^2_{W^{1,2}(\Omega)}\leq c\int_\Omega  \phi^\ast (|f|)+\phi^\ast(|\nabla f|)\,dx,
  \end{equation}
  where $c>0$ is a constant depending only on $n$, $\Omega$ and the upper and lower indices of uniform convexity of $\phi$.
\end{theorem}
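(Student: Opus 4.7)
The plan is to follow the two-step strategy indicated in the paper: first establish the estimate when $\phi$ has quadratic growth (so $\phi''\eqsim 1$, with constants that may depend on an approximation parameter $\eps$), and then pass to the limit in an approximation $\phi_\eps$ of the general uniformly convex $\phi$. The key invariant throughout is that every constant appearing in the a priori estimate must depend only on $n$, $\Omega$ and the indices $p^-, p^+$, never on $\eps$. Concretely, I would set $\phi_\eps(t) := \phi(t) + \eps t^2$, truncated so that $\phi_\eps'' \in [c_\eps, C_\eps]$, and solve the corresponding Dirichlet problem for $u_\eps$; weak compactness and the monotonicity relation~\eqref{eq:VA_basic2} then give convergence of $V(\eps u_\eps)$ to $V(\eps u)$, and lower semicontinuity transfers the uniform bound to the limit.

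For the quadratic case, I would use difference quotients. In the interior, testing the equation with $\tau_{-h}(\eta^2 \tau_h u)$ and using~\eqref{eq:VA_basic2} is classical. At the boundary, I would exploit the $C^{2,1}$-regularity to flatten the boundary locally: around each boundary point take a $C^{2,1}$-chart mapping $B\cap\Omega$ to a half-ball $B^+$ and $B\cap\partial\Omega$ to the flat piece. The pushed-forward PDE has the same structure plus lower-order perturbations coming from the Jacobian of the chart, which are controlled because the chart is $C^{2,1}$. On $B^+$ the tangential difference quotients $\tau_{h,k}$ for $k=1,\dots,n-1$ preserve the homogeneous boundary data, and testing the equation with $\tau_{-h,k}(\eta^2 \tau_{h,k} u)$ together with the Caccioppoli inequality from Section~\ref{sec:basic-prop-weak} gives
\begin{equation*}
  \int_{B^+}\eta^2 |\partial_k V(\eps u)|^2 \, dx \lesssim \int_{B^+} \phi^\ast(|f|) + \phi^\ast(|\nabla f|) + \phi(|\eps u|)\, dx,
\end{equation*}
for $k = 1, \dots, n-1$. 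Here Korn's inequality (also from Section~\ref{sec:basic-prop-weak}) is needed to convert control of $\eps u$ into control of $\nabla u$ when estimating the perturbation terms from the chart.

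The genuinely delicate step is recovering the missing normal component $\partial_n V(\eps u)$, and in particular $\partial_n(\eps u)_{nn}$, which the symmetric gradient hides from tangential difference quotients. For this I would read off $\partial_n A_{nn}(\eps u)$ from the $n$-th row of the equation, namely
\begin{equation*}
  \partial_n A_{nn}(\eps u) = - f_n - \sum_{k=1}^{n-1}\partial_k A_{nk}(\eps u),
\end{equation*}
where every term on the right is already controlled by $f$ or by the tangential estimates just obtained (since $A_{nk}(\eps u)$ for $k<n$ involves $(\eps u)_{nk}$ whose normal derivative equals a tangential derivative of $u_n$ plus tangential derivatives of $u_k$). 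Using $\phi'' \eqsim 1$ in the quadratic case to invert the Jacobian of $A$ yields the required bound on $\partial_n (\eps u)_{nn}$, hence on $\partial_n V(\eps u)$.

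A finite covering of $\partial\Omega$ by such boundary charts, combined with an interior estimate obtained by the same difference-quotient argument without flattening, produces the global bound~\eqref{eq:bound_in_main_thm} for $u_\eps$; passing $\eps\downarrow 0$ completes the proof. The principal obstacle is the interplay between Step~3 and the approximation: the normal-normal recovery uses uniform ellipticity, which is available only after regularization, so the whole argument must be carried out with constants depending solely on $p^-$ and $p^+$ of the original $\phi$. This is why the tangential estimate must be expressed purely through $V$ and $A$ via~\eqref{eq:VA_basic1}--\eqref{eq:VA_basic2}, since those equivalences transfer cleanly through the approximation, whereas bare estimates involving $\nabla^2 u$ or $\phi''$ would blow up as $\eps \to 0$.
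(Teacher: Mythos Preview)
Your outline captures the overall architecture correctly, but the normal--normal recovery step contains a genuine gap that is specific to the symmetric-gradient setting and is precisely the point where the paper has to work hardest.

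You write that once $\partial_n A_{nn}(\eps u)$ is read off from the $n$-th equation, inverting the Jacobian of $A$ yields $\partial_n(\eps u)_{nn}$. But $\partial_n A_{nn}(\eps u)=\sum_{k,l}\frac{\partial A_{nn}}{\partial P_{kl}}(\eps u)\,\partial_n(\eps u)_{kl}$ involves \emph{all} entries of $\partial_n\eps u$, and unlike the full-gradient case the off-block entries $\partial_n(\eps u)_{\alpha\beta}$ ($\alpha,\beta<n$) are \emph{not} expressible as tangential derivatives of $\eps u$. Indeed $\partial_n(\eps u)_{\alpha\beta}=\partial_\alpha(\eps u)_{\beta n}+\partial_\beta(\eps u)_{\alpha n}-\partial_{\alpha\beta}u_n$, and the last term $\partial_{\alpha\beta}u_n$ cannot be recovered from tangential derivatives of $\eps u$ alone (only the symmetric combinations $\partial_{\alpha\beta}u_n+\partial_{\alpha n}u_\beta$ are available). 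Your parenthetical about $(\eps u)_{nk}$ is likewise off: $\partial_n(\eps u)_{\alpha n}=\tfrac12(\partial_{nn}u_\alpha+\partial_{\alpha n}u_n)$ still contains $\partial_{nn}u_\alpha$. So you face $n$ scalar equations (the rows of the system) but many more unknown entries of $\partial_n\eps u$, and no direct inversion is possible with constants depending only on $p^\pm$. The paper avoids this by never isolating components: following Seregin--Shilkin it estimates the full pairing $\int\eta^2\,\partial_nA(\eps u):\partial_n\eps u\,dx\eqsim\int\eta^2|\partial_nV(\eps u)|^2\,dx$, splits it as $S_1+S_2+S_3$ according to the index pattern, uses the equation to replace $\partial_nA_{\alpha n}$ and $\partial_nA_{nn}$, and for the dangerous block $S_1$ uses the identity above together with an integration by parts that turns $\int\eta^2\,\partial_nA_{\alpha\beta}\,\partial_{\alpha\beta}u_n\,dx$ into terms involving only tangential derivatives of $A$ and $\eps u$. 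This is the missing mechanism in your sketch.

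Two smaller points. First, $\phi_\eps(t)=\phi(t)+\eps t^2$ gives $\phi_\eps''\ge 2\eps$ but no upper bound; the paper instead truncates on both sides via $\phi_\delta'(t)=\phi'(\delta^-\vee t\wedge\delta^+)\,t/(\delta^-\vee t\wedge\delta^+)$, which makes $\phi_\delta''$ two-sided bounded while keeping the indices in $[p^-\wedge2,p^+\vee2]$. Second, a full $C^{2,1}$-chart destroys the symmetric-gradient structure (the pushforward of $\eps u$ is not $\eps(u\circ\Phi^{-1})$), which is why the paper straightens only in the normal direction and works with curved tangential derivatives $\widetilde\partial_\alpha=\partial_\alpha+\partial_\alpha a\,\partial_n$; your ``lower-order perturbations from the Jacobian'' would in fact be leading-order for $\eps$.
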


The system~\eqref{eq:system_in_main_thm} is the Euler-Lagrange system to the energy~\eqref{eq:defJphi}. Note that Theorem~\ref{thm:p-growth} is a special case of Theorem~\ref{thm:orlicz} with $\phi(t) = \int_0^t (\delta +s)^{p-2}s\,ds$, $p \in (1,\infty)$ and $\delta \geq 0$.

\section{Basic properties of weak solutions}
\label{sec:basic-prop-weak}

In this section we begin with the basic properties of weak solutions from existence to Caccioppoli estimates. We also present Korn's inequalities that are needed in the context of the symmetric gradient.

\subsection{Korn's inequality}
\label{sec:korns-inequality}

We say that an N-function $\phi$ fulfills the $\Delta_2$ condition ($\phi \in \Delta_2$) if there exists a constant $c>0$ such that for all $t\geq 0$
\begin{equation}\label{eq:delta2cond}
	\phi(2t)\leq c~ \phi(t).
\end{equation}
We denote by $\Delta_2(\phi)$ the smallest constant $c$ which verifies inequality \eqref{eq:delta2cond}.

We need the following Korn-type inequality in Orlicz spaces that can be found for example in \cite[Theorem 6.13]{DieRuzSchu10}.

\begin{proposition}[Korn inequality]\label{prop:Korn}
  Let $\phi$ be an N-function with $\phi , \phi^*\in \Delta_2$. Then
  for all balls $B\subset \RRn$ and $u\in W^{1,1}(B;\RRn)$ we have
  \begin{equation*}
    \int_B \phi(|\nabla u-\mean{\nabla u}_B|)\,dx \leq c_1 \int_B \phi(|\eps u-\mean{\eps u}_B|)\,dx .
  \end{equation*}
  The constants $c_1$ depends only on $\Delta_2(\phi)$, $\Delta_2(\phi^*)$ and $n$.
\end{proposition}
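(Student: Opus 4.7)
The plan is to reduce the inequality to the $L^\phi$-boundedness of Calder\'on--Zygmund operators. Since subtracting an affine function alters neither $\nabla u - \mean{\nabla u}_B$ nor $\eps u - \mean{\eps u}_B$, I may assume $\mean{\nabla u}_B = 0$ without loss of generality. I would then extend $u$ from $B$ to all of $\RRn$ by a Stein-type extension bounded from $W^{1,\phi}(B)$ to $W^{1,\phi}(\RRn)$, so that the task reduces to the global estimate
\begin{equation*}
  \int_{\RRn} \phi(|\nabla u|)\,dx \lesssim \int_{\RRn} \phi(|\eps u|)\,dx.
\end{equation*}
The requisite extension exists precisely because $\phi,\phi^*\in \Delta_2$ guarantees that the Hardy--Littlewood maximal operator is bounded on $L^\phi$, which in turn controls the reflection/truncation operators involved in the construction.

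Next I would use the classical algebraic identity
\begin{equation*}
  \partial_i\partial_j u_k = \partial_i \eps(u)_{jk} + \partial_j \eps(u)_{ik} - \partial_k \eps(u)_{ij},
\end{equation*}
valid componentwise for any smooth vector field. Composing with a first-order inverse (equivalently, applying suitable Riesz transforms in the Fourier picture) one expresses each $\partial_j u_k$ as a finite sum of compositions of Riesz transforms applied to components of $\eps u$. The Korn inequality on $\RRn$ thus reduces to boundedness of Calder\'on--Zygmund operators on $L^\phi(\RRn)$.

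This last step is the crux. For $\phi,\phi^*\in \Delta_2$ the standard Calder\'on--Zygmund machinery does extend to $L^\phi$: one proves a sharp-maximal pointwise bound $M^\#(Tf)\lesssim M f$, and then invokes the Fefferman--Stein equivalence $\|f\|_{L^\phi}\eqsim \|M^\# f\|_{L^\phi}$ together with the $L^\phi$-boundedness of $M$. Both ingredients need exactly the two $\Delta_2$-conditions, which is why they appear in the constants. Once the global inequality is established, I would recover the local version on $B$ by applying it to $(u-A)\eta$ for an affine $A$ annihilating $\mean{\nabla u}_B$ and a cutoff $\eta\in C_c^\infty(2B)$ equal to one on $B$; the commutator terms produce zeroth-order quantities absorbed by an Orlicz Poincar\'e inequality, yielding constants that depend only on $\Delta_2(\phi)$, $\Delta_2(\phi^*)$ and $n$. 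The main obstacle is the Orlicz-Calder\'on--Zygmund step: bypassing the Muckenhoupt $A_p$ framework (which is not directly applicable since $\phi$ need not have a single growth exponent) requires the Fefferman--Stein/maximal-function route above, and it is here that both $\Delta_2(\phi)$ and $\Delta_2(\phi^*)$ are genuinely used.
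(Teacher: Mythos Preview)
The paper does not give its own proof of this proposition; it simply quotes the result from \cite[Theorem~6.13]{DieRuzSchu10}. So there is no in-paper argument to compare against. That said, your proposal has a genuine gap at the localization step, and it is precisely the step that makes Korn's inequality on bounded domains nontrivial.

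A Stein-type extension~$\tilde u$ of $u$ from $B$ to $\RRn$ controls $\int_{\RRn}\phi(|\nabla\tilde u|)\,dx$ by $\int_B\phi(|\nabla u|)\,dx$, but it does \emph{not} control $\int_{\RRn}\phi(|\eps\tilde u|)\,dx$ by $\int_B\phi(|\eps u|)\,dx$: the extension mixes the symmetric and antisymmetric parts of the gradient outside~$B$. Hence applying the global Korn inequality to $\tilde u$ yields only the tautology $\int_B\phi(|\nabla u|)\lesssim\int_{\RRn}\phi(|\eps\tilde u|)\lesssim\int_B\phi(|\nabla u|)$. Your alternative route, applying the global inequality to $(u-A)\eta$ with $\eta\in C_c^\infty(2B)$, has the same defect: $u$ is only defined on $B$, so a cutoff supported in $2B$ already presupposes an extension, and then $\eps(\eta\tilde u)=\eta\,\eps\tilde u+\nabla\eta\otimes_\eps\tilde u$ on $2B\setminus B$ involves quantities controlled only through $\nabla u$ on $B$. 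The ``zeroth-order'' commutator term is not harmless here: an Orlicz--Poincar\'e inequality bounds it by $\nabla u$, which is exactly the quantity you are trying to estimate, so the argument is circular.

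The Riesz-transform identity and the $L^\phi$-boundedness of Calder\'on--Zygmund operators are correct and relevant ingredients, but to make them yield the \emph{local} inequality one needs a representation that stays inside~$B$. One classical option (for balls or star-shaped domains) is an explicit integral formula of Re\v{s}etnjak/Nitsche type expressing $\nabla u-\mean{\nabla u}_B$ as a singular integral of $\eps u-\mean{\eps u}_B$ over $B$ itself; the Orlicz singular-integral bound then applies legitimately. The cited reference \cite{DieRuzSchu10} instead uses a Bogovski\u{\i}-type decomposition adapted to John domains. Either route avoids the extension/cutoff circularity above.
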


As a simple consequence we will obtain the following version for partial zero boundary values.

\begin{proposition}[Korn inequality; boundary]\label{prop:KornBoundary}
  Let $\phi$ be an N-function with $\phi , \phi^*\in \Delta_2$. Let $\Omega$ be a bounded Lipschitz domain.  Let $B \subset \RRn$ be a ball with center in $\overline{\Omega}$ such that $\Omega \cap B$ is again a Lipschitz domain. Then
  for all $u\in W^{1,1}(B \cap \Omega;\RRn)$ such that $u$ vanishes on $\partial \Omega \cap \overline{B}$ we have
  \begin{equation*}
    \int_{B \cap \Omega}\phi(|\nabla u|)\,dx\leq c_2 \int_{B \cap \Omega}\phi(|\eps u|)\,dx.
  \end{equation*}
  The constant $c_2$ depends only on $\Delta_2(\phi)$, $\Delta_2(\phi^*)$, $n$ and on the ratio $|B \cap \Omega|/ |B|$. 
\end{proposition}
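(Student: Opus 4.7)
The plan is to reduce the statement to the interior oscillation Korn inequality of Proposition~\ref{prop:Korn} on the full ball $B$ via extension by zero. Since $u$ has vanishing trace on $\partial \Omega \cap \overline{B}$ and $B \cap \Omega$ is Lipschitz, I would set $\tilde u : B \to \RRn$ equal to $u$ on $B \cap \Omega$ and $0$ on $B \setminus \Omega$; this $\tilde u$ lies in $W^{1,1}(B;\RRn)$ with $\nabla \tilde u = \nabla u$ on $B \cap \Omega$ and $\nabla \tilde u = 0$ on $B \setminus \Omega$. Writing $M \coloneqq \mean{\nabla \tilde u}_B$ and $N \coloneqq \mean{\eps \tilde u}_B$, Proposition~\ref{prop:Korn} applied to $\tilde u$ would give
\[
  \int_B \phi(|\nabla \tilde u - M|) \,dx \leq c_1 \int_B \phi(|\eps \tilde u - N|) \,dx.
\]

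The decisive observation is that on $B \setminus \Omega$ one has the pointwise identity $|\nabla \tilde u - M| = |M|$, so the left-hand side already dominates $|B \setminus \Omega|\,\phi(|M|)$. This is exactly where the zero-trace hypothesis is turned into control on the mean $M$, and where the volume ratio $|B \cap \Omega|/|B|$ enters, through $|B \setminus \Omega| = |B|\bigl(1 - |B \cap \Omega|/|B|\bigr)$. I would then decompose $\phi(|\nabla u|) \lesssim \phi(|\nabla u - M|) + \phi(|M|)$ by $\Delta_2(\phi)$, integrate over $B \cap \Omega$, and combine the two bounds to arrive at
\[
  \int_{B \cap \Omega} \phi(|\nabla u|) \,dx \lesssim \Bigl(1 + \tfrac{|B \cap \Omega|}{|B \setminus \Omega|}\Bigr) \int_B \phi(|\eps \tilde u - N|) \,dx.
\]

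It then remains to bound the right-hand side by $\int_{B\cap\Omega}\phi(|\eps u|)\,dx$. Splitting the integral along $B = (B \cap \Omega) \cup (B \setminus \Omega)$, on $B \cap \Omega$ I would apply $\Delta_2(\phi)$ once more to replace $|\eps \tilde u - N|$ by $|\eps u| + |N|$, while on $B \setminus \Omega$ the integrand collapses to $\phi(|N|)$. Jensen's inequality together with $\Delta_2(\phi)$ should then yield $|B|\,\phi(|N|) \leq \int_{B \cap \Omega} \phi(|\eps u|)\,dx$, closing the estimate with a constant depending only on $\Delta_2(\phi)$, $\Delta_2(\phi^\ast)$, $n$ and $|B \cap \Omega|/|B|$. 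The step that most deserves attention is the conversion of the oscillation inequality into a non-oscillation one: the argument genuinely needs $|B \setminus \Omega| > 0$, as the rigid rotation $u(x) = (-x_2,x_1,0,\dots,0)$ satisfies $\eps u \equiv 0$ but $\nabla u \not\equiv 0$, so without a nontrivial portion of $\partial(B\cap\Omega)$ on which $u$ vanishes, no inequality of this type can hold.
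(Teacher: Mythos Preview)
Your proposal is correct and follows essentially the same route as the paper: extend $u$ by zero to the full ball, use the region $B\setminus\Omega$ (where the extension vanishes) to control the mean $\mean{\nabla \tilde u}_B$, apply the interior Korn inequality of Proposition~\ref{prop:Korn}, and finish with Jensen's inequality to remove the mean of $\eps\tilde u$. The paper's write-up is slightly leaner---it observes directly that $\phi(|\mean{\nabla u}_B|)=\fint_{B\setminus B^+}\phi(|\nabla u-\mean{\nabla u}_B|)\,dx$ and then passes straight from $\int_B\phi(|\eps u-\mean{\eps u}_B|)\,dx$ to $\int_B\phi(|\eps u|)\,dx=\int_{B^+}\phi(|\eps u|)\,dx$ without splitting---but the substance is the same.
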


\begin{proof}
  We will reduce the claim to an application of Theorem~\ref{prop:Korn}. For this we extend $u$ by zero to a $W^{1,1}(B)$ function on the full ball $B$. Then with $B^+ \coloneqq \Omega \cap B$
  \begin{equation}\label{eq:korn_proof}
    \int_{B^+}\phi(|\nabla u|)\,dx =\int_B \phi(|\nabla u|)\,dx\lesssim \int_B \phi(|\nabla u - \mean{\nabla u}_B|)\,dx +|B| \phi(|\mean{\nabla u}_B|) .
  \end{equation}
  We have
  \begin{equation*}
    \phi(|\mean{\nabla u}_B|)=\fint_{B\setminus B^+}\phi(|\nabla u-\mean{\nabla u}_B|) \,dx\leq \frac{|B|}{|B\setminus B^+|}\fint_B\phi(|\nabla u-\mean{\nabla u}_B|) \,dx.
  \end{equation*}
  Combining this with \eqref{eq:korn_proof}, Theorem~\ref{prop:Korn} and Jensen's inequality we get
  \begin{align*}
    \int_{B^+} \phi(|\nabla u|)\,dx \lesssim \int_B \phi(|\eps u-\mean{\eps u}_B|)\,dx \lesssim \int_B \phi(|\eps u|)\,dx=\int_{B^+} \phi(|\eps u|)\,dx.
  \end{align*}
  This proves the claim.
\end{proof}

In similar fashion we have the following Poincaré type inequality.

\begin{proposition}[Poincaré inequality]\label{prop:Poincare}
  Let $\phi$ be an N-function with $\phi,\phi^\ast \in\Delta_2$. 
  \begin{enumerate}
  \item For all balls $B\subset \RRn$ with radius $r$ and $u\in W^{1,1}(B;\RRn)$ we have
    \begin{equation*}
      \int_B \phi\left(|u-\mean{u}_B|\right)\,dx \leq c_1 \int_B \phi(r|\nabla u|)\,dx .
    \end{equation*}
  \item Let $B \subset \RRn$ be a ball with radius~$r$ and center in $\overline{\Omega}$ such that $\Omega \cap B$ is again a Lipschitz domain. Then for all $u\in W^{1,1}(B \cap \Omega;\RRn)$ such that $u$ vanishes on $\partial \Omega \cap \overline{B}$ we have
    \begin{equation*}
      \int_{B \cap \Omega}\phi(|u|)\,dx\leq c_2 \int_{B \cap \Omega}\phi(r|\nabla u|)\,dx.
    \end{equation*}
  \end{enumerate}
  The constants $c_1$, $c_2$ depend only on $\Delta_2(\phi)$, $\Delta_2(\phi^*)$, $n$ and on the ratio $|B \cap \Omega|/ |B|$. 
\end{proposition}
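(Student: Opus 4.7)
The strategy mirrors the treatment of Proposition~\ref{prop:KornBoundary}: first establish the mean-value Poincaré inequality on a ball (part (i)), then extend by zero and use the vanishing trace to deduce (ii).

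For part (i), I would start from the classical pointwise estimate
\begin{equation*}
  |u(x) - \mean{u}_B| \leq c \int_B \frac{|\nabla u(y)|}{|x-y|^{n-1}}\,dy \qquad \text{for a.e.\ } x \in B,
\end{equation*}
valid for $u \in W^{1,1}(B;\RRn)$. A standard dyadic decomposition (Hedberg's trick) of this Riesz-type integral over a ball of radius $r$ yields the pointwise bound
\begin{equation*}
  |u(x) - \mean{u}_B| \leq c\, r\, M(|\nabla u|\chi_B)(x),
\end{equation*}
where $M$ denotes the Hardy--Littlewood maximal operator on $\RRn$. Since $\phi,\phi^* \in \Delta_2$, $M$ is bounded on $L^\phi(\RRn)$, which combined with $\phi\in\Delta_2$ gives
\begin{equation*}
  \int_B \phi(|u-\mean{u}_B|)\,dx \leq \int_B \phi\bigl(c r M(|\nabla u|\chi_B)\bigr)\,dx \leq c_1 \int_B \phi(r|\nabla u|)\,dx,
\end{equation*}
with $c_1$ depending only on $n$, $\Delta_2(\phi)$, and $\Delta_2(\phi^*)$. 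Alternatively, this estimate can be quoted directly from the literature; see, e.g., \cite{DieRuzSchu10}.

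For part (ii), I would extend $u$ by zero across $\partial\Omega\cap\overline{B}$ to obtain a function (still called $u$) in $W^{1,1}(B;\RRn)$ that vanishes on $B\setminus B^+$, where $B^+ := B\cap\Omega$. Applying part (i) on the full ball $B$ gives
\begin{equation*}
  \int_B \phi(|u - \mean{u}_B|)\,dx \lesssim \int_{B^+}\phi(r|\nabla u|)\,dx.
\end{equation*}
Since $u\equiv 0$ on $B\setminus B^+$,
\begin{equation*}
  \phi(|\mean{u}_B|) = \fint_{B\setminus B^+}\phi(|u-\mean{u}_B|)\,dx \leq \frac{|B|}{|B\setminus B^+|}\fint_B \phi(|u-\mean{u}_B|)\,dx.
\end{equation*}
Combining these with the triangle inequality $|u|\leq |u-\mean{u}_B|+|\mean{u}_B|$, the convexity of $\phi$, and $\phi\in\Delta_2$ yields
\begin{equation*}
  \int_{B^+}\phi(|u|)\,dx \lesssim \int_B\phi(|u-\mean{u}_B|)\,dx + |B|\,\phi(|\mean{u}_B|) \lesssim \int_{B^+}\phi(r|\nabla u|)\,dx,
\end{equation*}
where the factor $|B|/|B\setminus B^+|$ produces exactly the announced dependence on the ratio $|B\cap\Omega|/|B|$.

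The only genuine technical point is the Orlicz Poincaré estimate of part (i); once it is granted, part (ii) is essentially a verbatim repetition of the argument in Proposition~\ref{prop:KornBoundary}, with Korn replaced by the ball Poincaré inequality and the role of $\nabla u$ played by $r^{-1}u$. I do not anticipate any new obstacles beyond those already handled in that proof.
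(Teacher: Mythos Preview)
Your proposal is correct and matches the paper's intended argument: the paper does not give a detailed proof of this proposition but only remarks that it follows ``in similar fashion'' to Proposition~\ref{prop:KornBoundary}, i.e.\ part~(i) is taken from the literature (e.g.\ \cite{DieRuzSchu10}) and part~(ii) is obtained by the same extend-by-zero trick you describe. Your sketch of part~(i) via the Riesz potential estimate and maximal operator boundedness is the standard route and is a welcome addition of detail.
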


The Korn and Poincaré inequality enable us to work in spaces depending on the full gradient instead of the symmetric gradient. In case of non-uniformly convex $\phi$ optimal embeddings for the spaces with the symmetric gradient can be found in~\cite{BreitCianchi2021}.

Note that Propositions~\ref{prop:Korn}, \ref{prop:KornBoundary} and~\ref{prop:Poincare} also hold for cylinders instead of balls. We will use them in this form later.

\subsection{Weak solutions}
\label{sec:weak-solutions}

For every $f \in L^{\phi^*}(\Omega)$ there exists a unique weak solution $u \in W_0^{1,\phi}(\Omega)$ of system \eqref{eq:system_in_main_thm} in the sense that for all $\psi \in W_0^{1,\phi}(\Omega)$ we have
\begin{equation}\label{eq:weak_solution}
  \int_\Omega A(\eps u) : \eps \psi \,dx = \int_\Omega f \psi \, dx.
\end{equation}
Moreover, testing with $\psi=u$ and using the \Poincare{} inequality we obtain the standard energy estimate
\begin{equation}\label{eq:energy_estimate}
  \int_\Omega \phi(|\eps u|)\,dx \leq c \int_\Omega \phi^\ast(|f|)\,dx,
\end{equation}
where $c>0$ depends only on $n$, $\Omega$, $p^+$ and $p^-$.

\subsection{Caccioppoli estimates}
\label{sec:cacc-estim}
In this subsection we derive a Caccioppoli-type inequality. Before that we recall some formulas for the symmetric gradient.

If $u\in C^1(\RRn,\RRn)$ is a vector field and $\eta\in C^1(\RRn,\setR)$ is a scalar function then
\begin{equation*}
	\eps (\eta u)=\eta \eps u + \nabla \eta \otimes_\eps u,
\end{equation*}
where for $u,v\in \RRn$ we set
\begin{equation*}
	u\otimes_\eps v \coloneqq \tfrac 12 (u \otimes v + v \otimes u) = \tfrac{1}{2} \big( u_iv_j + v_iu_j \big)_{ij}.
\end{equation*}
The set of smooth functions $u$ with $\eps u=0$ is given by the space of infinitesimal rigid motions
\begin{equation*}
  \mathcal{R}\coloneqq \lbrace x\mapsto Ax+b, \text{where } A\in \RRnn, b\in \RRn , A^T=-A\rbrace.
\end{equation*}

\begin{lemma}[Caccioppoli inequality]\label{lem:lower_order_Caccioppoli}
  Let $u$ be the weak solution of system \eqref{eq:system_in_main_thm}. We then have for every ball $B$ with radius~$r$ and $2B\subset\Omega$ the estimate
  \begin{equation}\label{eq:lower_order_Cacc_interior}
    \fint_{B} \phi(|\eps u|)\,dx \leq c_1 ~\inf _{\pi \in \mathcal{R}}\fint_{2B} \phi \left(\dfrac{|u-\pi|}{r}\right)+\phi^\ast(r|f|)\,dx.
  \end{equation}
  If $B \subset \RRn$ is a ball with center in $\overline{\Omega}$ such that $\Omega \cap B$ is again a Lipschitz domain, then
  for all $u\in W^{1,1}(B \cap \Omega;\RRn)$ such that $u$ vanishes on $\partial \Omega \cap \overline{B}$ we have
  \begin{equation}\label{eq:lower_order_Cacc_boundary}
    \fint_{B \cap \Omega} \phi(|\eps u|)\,dx \leq c_2 ~\fint_{2B \cap \Omega} \phi \bigg(\frac{|u|}{r}\bigg)+ \phi^\ast(r|f|)\,dx.
  \end{equation}
  The constants $c_1,c_2>0$ depend only on $p^-$, $p^+$, $n$ and $\abs{\Omega \cap B}/\abs{B}$.
\end{lemma}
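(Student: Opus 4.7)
\medskip\noindent\textbf{Proof plan.} The plan is to prove both estimates by testing the weak formulation~\eqref{eq:weak_solution} with a localized version of $u$ (minus a rigid motion, in the interior case), and then using the standard absorbing argument adapted to Orlicz growth via the indices $p^-$ and $p^+$.

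First I would choose a Lipschitz cutoff $\eta$ with $\eta\equiv 1$ on $B$, $\supp\eta\subset 2B$, $0\le\eta\le 1$ and $|\nabla\eta|\lesssim 1/r$. For the interior case, fix $\pi\in\mathcal{R}$ and use $\psi\coloneqq \eta^{p^+}(u-\pi)$ as test function. Since $2B\subset\Omega$, $\psi\in W^{1,\phi}_0(\Omega)$. Because $\eps\pi=0$, the product rule from Section~\ref{sec:cacc-estim} gives
\begin{equation*}
  \eps\psi \;=\; \eta^{p^+}\eps u + p^+\,\eta^{p^+-1}\,\nabla\eta\otimes_\eps(u-\pi).
\end{equation*}
Inserting this into~\eqref{eq:weak_solution} and using $A(\eps u):\eps u\eqsim\phi(|\eps u|)$ together with $|A(\eps u)|\eqsim\phi'(|\eps u|)$ yields
\begin{equation*}
  \int_{2B}\eta^{p^+}\phi(|\eps u|)\,dx
  \;\lesssim\; \int_{2B}\eta^{p^+-1}\phi'(|\eps u|)\frac{|u-\pi|}{r}\,dx
  + \int_{2B}|f|\,\eta^{p^+}|u-\pi|\,dx.
\end{equation*}

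Next I would handle the two right-hand terms with Young's inequality in Orlicz form, exploiting the scaling of uniformly convex N-functions. For the first term, I apply $ab\le \phi(a/\tau)+\phi^*(\tau b)$ with $a=|u-\pi|/r$ and $b=\eta^{p^+-1}\phi'(|\eps u|)$, and then use (from the appendix) the rescaling bounds $\phi(s/\tau)\le\tau^{-p^+}\phi(s)$ and $\phi^*(\lambda t)\le\lambda^{(p^+)'}\phi^*(t)$ for $\lambda\in[0,1]$. With $\lambda=\tau\eta^{p^+-1}\le 1$ this gives
\begin{equation*}
  \phi^*\bigl(\tau\eta^{p^+-1}\phi'(|\eps u|)\bigr)
  \;\le\; \tau^{(p^+)'}\eta^{(p^+-1)(p^+)'}\phi^*(\phi'(|\eps u|))
  \;\lesssim\; \tau^{(p^+)'}\eta^{p^+}\phi(|\eps u|),
\end{equation*}
since $(p^+-1)(p^+)'=p^+$ and $\phi^*(\phi'(t))\eqsim\phi(t)$. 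Choosing $\tau$ small absorbs the resulting $\eta^{p^+}\phi(|\eps u|)$ term into the left-hand side. For the $f$-term, I use $\eta^{p^+}\le 1$ and $ab\le\phi(a/r)+\phi^*(rb)$ to obtain $|f||u-\pi|\le\phi(|u-\pi|/r)+\phi^*(r|f|)$. Combining everything and using $\eta=1$ on $B$ and $|2B|\eqsim|B|$, dividing by $|B|$ and taking the infimum over $\pi\in\mathcal{R}$ yields~\eqref{eq:lower_order_Cacc_interior}.

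For the boundary estimate I use the same argument with $\pi=0$ and test function $\psi\coloneqq\eta^{p^+}u$. The point is that $\psi$ is still an admissible element of $W^{1,\phi}_0(\Omega)$: $\eta$ is supported in $2B$ and $u$ vanishes on $\partial\Omega\cap\overline{B}$ (and in fact on all of $\partial\Omega$ as the weak solution), so the trace of $\psi$ on $\partial\Omega$ is zero. The computation proceeds verbatim on $\Omega\cap 2B$ in place of $2B$, with $|u|$ replacing $|u-\pi|$; the final step uses $|2B\cap\Omega|\le 2^n|B|\lesssim(|B|/|B\cap\Omega|)\,|B\cap\Omega|$ to move from averages over $2B\cap\Omega$ to~\eqref{eq:lower_order_Cacc_boundary}, which is the source of the dependence of $c_2$ on $|B\cap\Omega|/|B|$. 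The main technical obstacle is the Young-inequality step: arranging the split so that exactly a factor $\eta^{p^+}$ reappears on the side of $\phi(|\eps u|)$ so that it can be absorbed; this is where the upper index $p^+$ enters critically and is the reason the test function is chosen with the power $\eta^{p^+}$ rather than $\eta$.
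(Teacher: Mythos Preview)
Your proof is correct and follows essentially the same approach as the paper: the same test function $\psi=\eta^{p^+}(u-\pi)$ (resp.\ $\eta^{p^+}u$ at the boundary), the same product rule, the same Young/Simonenko absorbing step recovering the factor $\eta^{p^+}$ via $(p^+-1)(p^+)'=p^+$, and the same direct Young estimate for the $f$-term. The only cosmetic difference is that you scale Young's inequality explicitly with a parameter $\tau$, whereas the paper cites the pre-scaled form~\eqref{eq:young} and then applies Lemma~\ref{lem:simonenko} to pull out $\eta^{p^+-1}$; these are the same computation.
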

\begin{proof}
  We begin with the case $2B \subset \Omega$. Let $\eta$ be a cut-off function with $\indicator_B \leq \eta \leq \indicator_{2B}$ and $\norm{\nabla \eta}_\infty \lesssim 1/r$. For arbitrary $\pi \in \mathcal{R}$ we use the test function $\psi = \eta ^{p^+} (u-\pi)$.  This gives us
  \begin{equation*}
    \fint_{2B} \eta^{p^+} A(\eps u): \eps u +p^+ \eta^{p^+-1} A(\eps u) :\nabla \eta \otimes_\eps (u-\pi)\,dx=\fint_{2B}\eta^{p^+}f (u-\pi)\,dx.
  \end{equation*}
  Thus, using the properties of $\eta$ we get
  \begin{equation}\label{eq:lower_cacc_estimate}
    \fint_{2B} \eta^{p^+} \phi(|\eps u|) \,dx \lesssim  \fint_{2B}\eta^{p^+-1} \phi^\prime(|\eps u|)  \dfrac{|u-\pi|}{r}\,dx +\fint_{2B} \eta^{p^+}|f||u-\pi|\,dx.
  \end{equation}
  The first integral on the right hand side of \eqref{eq:lower_cacc_estimate} can be estimated using Young's inequality \eqref{eq:young} and Lemma \ref{lem:simonenko}, giving us for every $\lambda>0$
  \begin{align*}
    \eta^{p^+-1} \phi^\prime(|\eps u|)  \dfrac{|u|}{r}
    &\leq \lambda \phi^\ast(\eta^{p^+-1} \phi^\prime(|\eps u|))+ C_\lambda \phi (|u-\pi|/r)
    \\
    &\lesssim \lambda \eta^{p^+}\phi^\ast(\phi^\prime(|\eps u|))+C_\lambda \phi(|u-\pi|/r)\\
    &\lesssim \lambda \eta^{p^+} \phi (|\eps u|)+C_\lambda \phi(|u-\pi|/r).
  \end{align*}
  Choosing $\lambda >0$ small enough, we can absorb the first term into the left hand side.  
  The second integral on the right hand side of \eqref{eq:lower_cacc_estimate} can directly be estimated using Young's inequality \eqref{eq:young}. This shows \eqref{eq:lower_order_Cacc_interior}.

  The boundary estimate~\eqref{eq:lower_order_Cacc_boundary} follows in the same way using the test function $\psi= \eta^{p^+} u$. Note that $\psi=0$ on $\partial \Omega$, since $u=0$ on $\partial \Omega$.
\end{proof}

\section{Global regularity}
\label{sec:global-regularity}

In this section we prove our main result. Since the proof for interior regularity is similar but easier than the proof of boundary regularity, we focus in our presentation on the regularity at the boundary. First, we explain how we treat the boundary by flattening.  Secondly, we show our main result for a subclass of Orlicz functions, which we call \emph{N-functions with quadratic growth}, see Definition~\ref{def:phi-quadratic}. Afterwards, we use an approximation argument to prove the general case.

\subsection{Flattening of the boundary}
\label{sec:flattening-boundary}

We flatten the boundary by using local isomorphisms which change $\Omega$ only in one direction. Similar to \cite{BerRuz17} we introduce the following notation. First we fix a point $P\in \partial \Omega$ and define what it means for $\partial\Omega$ to be locally $C^{2,1}$-regular at $P$. For this we fix a local coordinate system in which $P=0$ holds. In this coordinate system we use for every $x\in \RRn$ the notation $x^\prime \coloneqq (x_1,...,x_{n-1})$. Furthermore, we denote by $B'_r$ the $(n-1)$-dimensional ball with radius $r$ centered at $0$.

We now say that $\partial\Omega$ is locally $C^{2,1}$-regular at $P$ if there exists $0<r<1$ and a $C^{2,1}$-function $a:B^\prime _r\rightarrow (-r,r)$ with $a(0)=0$, such that $\partial\Omega$ is locally the graph of $a$, i.e.
\begin{enumerate}
	\item $\Omega_P \coloneqq \lbrace x\in \RRn: x^\prime \in B^\prime_r ,~ a(x^\prime)<x_n<a(x^\prime)+r\rbrace \subset \Omega$,
	\item $x\in \partial\Omega \cap \big(B^\prime_r\times (-r,r)\big) \iff x_n=a(x^\prime)$.
\end{enumerate}
Furthermore, we assume that the local coordinate system is chosen such that $\nabla a (0)=0$ holds. This also implies that we can make $\|a\|_{C^1(B^\prime_r)}$ arbitrarily small by choosing $r$ small enough. Finally, we say $\partial \Omega$ is $C^{2,1}$-regular, if it is locally $C^{2,1}$-regular at every point $P\in \partial \Omega$.

For $0<\lambda\leq 1$ we define the scaled sets
\begin{equation*}
	\lambda \Omega_P \coloneqq  \lbrace x\in \RRn: x^\prime \in B_{\lambda r}^\prime, ~a(x^\prime)<x_n<a(x^\prime)+\lambda r  \rbrace.
\end{equation*}
We use the notation $B^+\coloneqq \tfrac{1}{2}\Omega_P$ and $2B^+\coloneqq \Omega_P$. We call them \emph{cylinders at the boundary}.  We use the convention that we call the first $(n-1)$ directions the \emph{tangential directions} and denote them with Greek letters. The $n$-th direction we call \emph{normal direction}. Note however, that while the $n$-th unit vector $e_n$ is indeed orthogonal to $\partial \Omega$ at $P$, it is only approximately orthogonal to $\partial \Omega$ at points near $P$. This is due to the curvature of the boundary.

Next we define curved partial derivatives along the boundary. For this we fix a tangential direction $e_\alpha$ and define for a function $f:\Omega_P\rightarrow \setR$ and $h>0$ the \emph{tangential translation} via
\begin{equation*}
	f_\tau (x)\coloneqq  f\big(x^\prime +h e_\alpha, x_n+a(x^\prime+h e_\alpha)-a(x^\prime) \big).
\end{equation*}
For vector- or matrix-valued functions we apply this definition component wise. If $f\in W^{1,1}(\Omega)$ with $\support f\subset \Omega_P$ we have for almost every $x\in \Omega_P$
\begin{equation*}
	\widetilde{\partial}_\alpha f(x)\coloneqq \lim_{h\rightarrow 0}h^{-1}(f_\tau(x)-f(x))=\partial_\alpha f(x) + \partial_\alpha a(x) \partial_n f(x).
\end{equation*}
In case of a (locally) flat boundary this coincides with the partial derivative in the tangential direction $e_\alpha$. Otherwise, the second term accounts for the fact that we take the limit of small steps parallel to the boundary and not in the direction of any fixed unit vector. We now collect some formulas for the curved derivatives $\widetilde{\partial}$. 

Firstly, the curved derivatives $\widetilde{\partial}_\alpha$ do not commute with the standard partial derivatives. A straight-forward calculation reveals that for $f\in W^{1,1}(\Omega,\RRn)$ we have in $\Omega_P$
\begin{equation}\label{eq:curved_deriv_commutators}
	\begin{aligned}
		\nabla \widetilde{\partial}_\alpha f &= \widetilde{\partial}_\alpha \nabla f + \partial_\alpha \nabla a \otimes \partial_n f,\\
		\eps (\widetilde{\partial}_\alpha f) &= \widetilde{\partial}_\alpha \eps f + \partial_\alpha \nabla a \otimes_\eps \partial_n f,\\
		\divergence (\widetilde{\partial}_\alpha f) &=\widetilde{\partial}_\alpha \divergence f+\partial_\alpha \nabla a \cdot \partial_n f.
	\end{aligned}
\end{equation}
If $f,g \in W^{1,1}(\Omega)$ with $\support f \cap \support g \subset \Omega_P$ then we have the integration by parts formula
\begin{align}\label{eq:ibp_curved}
  \int_{\Omega_P} \widetilde{\partial}_\alpha f g \,dx= -\int_{\Omega_P} f\widetilde{\partial}_\alpha g\,dx.
\end{align}

Later on we will frequently use cut-off functions to localize either on a ball in the interior of $\Omega$ or on a cylinder at the boundary of $\Omega$.

Let $B$ be a ball such that $2B\subset \Omega$. Then we choose $\eta=\eta_B\in C_c^\infty (2B)$ such that
\begin{equation}\label{eq:cut_off}
	\mathbbm{1}_B \leq \eta \leq \mathbbm{1}_{2B} \quad \text{and} \quad  r\norm{\nabla \eta}_{L^\infty}+r^2\norm{\nabla^2 \eta}_{L^\infty}+r^3\norm{\nabla^3 \eta}_{L^\infty}\leq C, 
\end{equation}
where $C>0$ depends only on the dimension $n$.

For cylinders at the boundary we choose a cut-off function $\eta \in C^\infty (2B^+,[0,1])$ such that $\mathbbm{1}_{B^+}\leq \eta\leq \mathbbm{1}_{2B^+}$ and the estimate in \eqref{eq:cut_off} holds. Note that with this definition $\eta$ does not vanish on $\partial \Omega$.

\subsection{Boundary regularity for systems with quadratic growth}
\label{sec:bound-regul-line}

Our main result is proved for general uniformly convex N-functions. While it is in principle possible to work directly in this generality, it is difficult to justify some steps. It is more convenient to work with N-functions that still have some kind of quadratic growth in the sense that $\delta \leq \phi''(t) \leq \delta^{-1}$ for small~$\delta>0$. The corresponding partial differential equation then has linear growth. This allows us to work in the framework of $u \in W^{2,2}(\Omega)$ simplifying the calculations.  The estimates that we will derive will be independent of the choice of~$\delta$ and depend only on~$p^-$ and $p^+$. This allows us later by some approximation process of the N-function~$\phi$ to cover the general case of uniformly convex N-functions.

\begin{definition}[N-functions with quadratic growth]
  \label{def:phi-quadratic}
  We say that a uniformly convex N-function~$\phi$ has quadratic growth if there exists a $\delta>0$ such that for all $t\geq 0$ we have
  \begin{equation*}
    \delta \leq \phi^{\prime \prime}(t) \leq \delta^{-1}.
  \end{equation*}
\end{definition}

If $\phi$ has quadratic growth, then $\frac 12 \delta t^2 \leq \phi(t) \leq \frac 12\delta^{-1}t^2$ and hence $W^{1,\phi}(\Omega)=W^{1,2}(\Omega)$ with equivalent norms.

Note that in \eqref{eq:VA_basic1} and \eqref{eq:VA_basic2} all constants depend on $\phi$ only through $p^-$ and $p^+$. Using a limiting process we get for every $u\in W^{2,2}(\Omega)$ and almost every $x\in \Omega$ the equivalences
\begin{equation}\label{eq:AV_basic_differentiated}
\begin{aligned}
	\partial _i A(\eps u(x)):\partial_i \eps u(x)&\eqsim |\partial_i V(\eps u(x))|^2 \eqsim \phi^{\prime \prime}(|\eps u(x)|)|\partial_i \eps u(x)|^2,\\
	\text{and}\qquad |\partial_i A(\eps u(x))|&\eqsim \phi^{\prime \prime}(|\eps u(x)|)|\partial_i \eps u(x)|.
\end{aligned}
\end{equation}
where the implicit constants again only depend on $p^-$, $p^+$ and $n$. This also holds for the curved derivatives in the sense that for every cylinder $B^+$ at the boundary and $u\in W^{2,2}(B^+,\RRn)$
\begin{equation}\label{eq:A-V_for_curved}
	\widetilde{\partial}_\alpha A(\eps u): \widetilde{\partial}_\alpha \eps u \eqsim |\widetilde{\partial}_\alpha V(\eps u)|^2
\end{equation}
holds pointwise almost everywhere in $B^+$ with implicit constants depending only on $p^-$, $p^+$ and $n$.

The goal is to show the main result for N-function with quadratic growth.

\begin{proposition}\label{pro:nondegen}
  Let $\phi$ be a uniformly convex N-function with quadratic growth, see Definition~\ref{def:phi-quadratic}. Let $\Omega \subset \RRn$ be a bounded $C^{2,1}$-regular domain and $f\in W^{1,2}(\Omega) \cap W^{1,\phi^\ast}_0(\Omega)$. Then there exists a unique weak solution $u\in W_0^{1,2}(\Omega)\cap W^{2,2}(\Omega)$ to system \eqref{eq:system_in_main_thm} which fulfills $V(\eps u) \in W^{1,2}(\Omega)$ with the estimate
  \begin{equation}\label{eq:main_estimate}
    \int_{\Omega} |\nabla V(\eps u)|^2 \,dx\leq c \int_{\Omega} \phi^\ast(|f|)+\phi^\ast(|\nabla f|)\,dx,
  \end{equation}
  where $c>0$ depends only on $\Omega$, $n$ and the indices $p^-$ and $p^+$ of $\phi$.
\end{proposition}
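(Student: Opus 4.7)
The plan is to prove~\eqref{eq:main_estimate} by a Caccioppoli-type estimate for the differentiated system, executed separately in the interior of $\Omega$ and at each boundary point, and then summed via a finite cover of $\overline{\Omega}$. Quadratic growth is used only to justify the calculations rigorously (in particular to ensure $u\in W^{2,2}(\Omega)$), while the actual bounds must remain independent of the parameter $\delta$ of Definition~\ref{def:phi-quadratic} by routing every manipulation through~\eqref{eq:AV_basic_differentiated} and~\eqref{eq:A-V_for_curved}, whose constants depend only on $p^-$ and $p^+$. Existence and uniqueness of $u\in W_0^{1,2}(\Omega)$ follow from strong monotonicity of the operator $\psi\mapsto \int A(\eps u):\eps\psi\,dx$ combined with the Korn inequality of Proposition~\ref{prop:Korn}, and the global $W^{2,2}$-regularity is a standard consequence of the difference-quotient method for linear-growth elliptic systems on $C^{2,1}$-domains.

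For the interior estimate, fix a ball $B$ with $2B\subset\Omega$ and a cutoff $\eta$ as in~\eqref{eq:cut_off}. Test the weak formulation with $\psi=-\partial_i(\eta^2\partial_i u)$ and integrate by parts to obtain
\begin{equation*}
  \int \eta^2\,\partial_i A(\eps u):\partial_i \eps u\,dx + 2\int \eta\,\partial_i A(\eps u):\bigl(\nabla \eta\otimes_\eps \partial_i u\bigr)\,dx = \int \partial_i f\cdot \eta^2 \partial_i u\,dx.
\end{equation*}
By~\eqref{eq:AV_basic_differentiated} the first term is comparable to $\int\eta^2|\partial_i V(\eps u)|^2\,dx$; the mixed term and the right-hand side are absorbed by Young's inequality in the $\phi/\phi^\ast$ duality, using also the pointwise bound $|\partial_i A(\eps u)|^2\lesssim \phi^{\prime\prime}(|\eps u|)|\partial_i V(\eps u)|^2$. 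Summation over $i$ combined with the lower-order Caccioppoli estimate of Lemma~\ref{lem:lower_order_Caccioppoli} applied to the remaining $L^2$-term in $\partial_i u$ yields
\begin{equation*}
  \int_B |\nabla V(\eps u)|^2\,dx \lesssim \int_{2B} \phi(|u-\pi|/r) + \phi^\ast(|f|) + \phi^\ast(|\nabla f|)\,dx
\end{equation*}
for some $\pi\in\mathcal{R}$, with constants depending only on $n$, $p^-$, $p^+$.

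For the boundary estimate, fix $P\in\partial\Omega$, flatten as in Section~\ref{sec:flattening-boundary}, and choose a cutoff $\eta$ supported in $2B^+$ (not vanishing on the flat part of the boundary). Test with the curved variant $\psi = -\widetilde{\partial}_\alpha(\eta^2\widetilde{\partial}_\alpha u)$ for each tangential index $\alpha\in\{1,\dots,n-1\}$; this is admissible because $\widetilde{\partial}_\alpha u=0$ on $\partial\Omega\cap\overline{B^+}$, since $u$ vanishes there. Using the integration-by-parts formula~\eqref{eq:ibp_curved} together with the commutators~\eqref{eq:curved_deriv_commutators}, the leading term becomes $\int \eta^2 \widetilde{\partial}_\alpha A(\eps u):\widetilde{\partial}_\alpha \eps u\,dx \eqsim \int \eta^2|\widetilde{\partial}_\alpha V(\eps u)|^2\,dx$ via~\eqref{eq:A-V_for_curved}, plus curvature errors proportional to $\partial_\alpha\nabla a$; shrinking $r$ makes $\|\nabla a\|_\infty$ arbitrarily small while $\|\nabla^2 a\|_\infty$ stays bounded, so these terms are either absorbed or controlled by Young's inequality. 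This produces $L^2$-control of every purely tangential derivative of $V(\eps u)$ on $B^+$. The remaining normal derivative is recovered from the PDE itself: the $n$-th equation reads $-\partial_n A(\eps u)_{nn} = f_n + \sum_{\alpha<n}\partial_\alpha A(\eps u)_{n\alpha}$, whose right-hand side is already $L^2$-controlled by the tangential estimate and by $f$, and the equivalences in~\eqref{eq:AV_basic_differentiated} convert this into an $L^2$-bound on the corresponding component of $\partial_n V(\eps u)$. The mixed component $\partial_n(\eps u)_{\alpha\beta}$ with $\alpha,\beta<n$ is purely tangential by the identity $\partial_n(\eps u)_{\alpha\beta} = \partial_\alpha(\eps u)_{n\beta}+\partial_\beta(\eps u)_{n\alpha} - \partial_\alpha\partial_\beta u_n$.

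A finite covering argument glues the interior and boundary Caccioppoli-type bounds into the global estimate~\eqref{eq:main_estimate}. The principal technical obstacle is the normal-direction recovery: for the full-gradient $p$-Laplacian one reads off $\partial_n^2 u$ directly from the divergence, whereas here the divergence only yields information on $\partial_n A(\eps u)_{nn}$, and translating it to $\partial_n V(\eps u)$ must rely on the uniform-in-$\delta$ Orlicz equivalences~\eqref{eq:AV_basic_differentiated}. A secondary difficulty is the careful bookkeeping of the curvature error terms coming from the non-commutativity of $\widetilde{\partial}_\alpha$ with $\eps$ and $\divergence$; smallness of $\|\nabla a\|_\infty$ on sufficiently small cylinders is required to prevent any $\delta$-dependence from leaking into the final constants.
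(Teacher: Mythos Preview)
Your interior and tangential boundary estimates match the paper. The gap is in the normal-direction recovery. The claim that $\sum_\alpha\partial_\alpha A(\eps u)_{n\alpha}$ is ``already $L^2$-controlled by the tangential estimate'' is only true with a $\delta$-dependent constant: the tangential step controls $\int\eta^2\phi''(|\eps u|)\,|\partial_\alpha\eps u|^2\,dx$, whereas $|\partial_\alpha A(\eps u)|^2\eqsim(\phi''(|\eps u|))^2|\partial_\alpha\eps u|^2$, and the extra factor of $\phi''$ is not uniformly bounded. More seriously, the step ``the equivalences in~\eqref{eq:AV_basic_differentiated} convert this into an $L^2$-bound on the corresponding component of $\partial_n V(\eps u)$'' is false. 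Since $V$ depends nonlinearly on the full tensor $\eps u$, every component of $\partial_n V(\eps u)$ involves all entries of $\partial_n\eps u$; the equivalence~\eqref{eq:AV_basic_differentiated} holds only for the full contraction $\partial_n A(\eps u):\partial_n\eps u\eqsim|\partial_n V(\eps u)|^2$, not componentwise. Knowing $\partial_n A(\eps u)_{nn}$ alone gives nothing about $\partial_n V(\eps u)$ with $\delta$-independent constants.

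The paper (Lemma~\ref{lem:normalnormal}) instead estimates $\int\eta^2\,\partial_n A(\eps u):\partial_n\eps u\,dx$ directly, splitting the sum over indices into blocks $(\alpha,\beta)$, $(\alpha,n)$, $(n,n)$. In the last two blocks the PDE replaces $\partial_n A(\eps u)_{\cdot n}$ by tangential derivatives of $A$ plus $f$; the resulting mixed products $\int\eta^2|\partial_\beta A(\eps u)|\,|\partial_n\eps u|$ are not bounded outright but are handled by a weighted Young inequality (Lemma~\ref{lem:normaltang}) yielding $\lambda\int\eta^2|\partial_n V(\eps u)|^2+\lambda^{-1}\int\eta^2|\partial_\beta V(\eps u)|^2$ with $\lambda$ small and absorbable, while the $f$-terms need an integration by parts that uses $f|_{\partial\Omega}=0$. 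For the $(\alpha,\beta)$ block your identity $\partial_n\eps_{\alpha\beta}u=\partial_\alpha\eps_{\beta n}u+\partial_\beta\eps_{\alpha n}u-\partial_{\alpha\beta}u_n$ is indeed applied, but the remaining term $\int\eta^2\,\partial_n A(\eps u)_{\alpha\beta}\,\partial_{\alpha\beta}u_n$ still carries a normal derivative on $A$ and requires a further integration by parts (moving $\partial_n$ off $A$ and one $\partial_\alpha$ off $u_n$) before it can be bounded by tangential quantities. Both the auxiliary estimate of Lemma~\ref{lem:normaltang} and this second integration by parts are missing from your outline, and without them the normal-direction bound cannot be closed with constants independent of~$\delta$.
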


We already know that~\eqref{eq:system_in_main_thm} admits a unique weak solution. Since $\phi$ has quadratic growth our system is of $2$-structure in the sense of~\cite[Definition 2.1]{BerRuz20}. Thus, using~\cite[Theorem 1.1]{BerRuz20} we get even more regularity, namely
\begin{equation*}
	u\in W^{1,2}_0(\Omega)\cap W^{2,2}(\Omega).
\end{equation*}
Note that this implies
\begin{equation}\label{eq:pw_equation}
	-\divergence(A(\eps u(x)))=f(x)\qquad\text{for almost every $x\in\Omega$.}
\end{equation}
In order to prove Proposition \ref{pro:nondegen}, we need to control $\partial_i V(\eps u)$ for every direction $1\leq i \leq n$. In Lemma \ref{lem:tangtang} we do this for all directions in the interior and for all tangential derivatives $\partial_\alpha$ at the boundary. The main idea is to test the weak equation \eqref{eq:weak_solution} with $\psi=\partial_{\alpha\alpha}u$. Since this is not a test function, we have to multiply it with a cut-off function.

For the normal direction $i=n$ we can not use the same approach, since $\partial_{nn}u$ does not vanish on $\partial \Omega$. Instead, we show in Lemma \ref{lem:normalnormal} that we can use the pointwise equation \eqref{eq:pw_equation} to bound the normal derivative $\partial_n V(\eps u)$ by the tangential derivatives.
Lemma \ref{lem:normaltang} is an auxiliary Lemma that we need for the proof of Lemma \ref{lem:normalnormal}. 

\begin{lemma}\label{lem:tangtang}
	Under the assumptions of Proposition \ref{pro:nondegen} we have for every ball $B$ with $2B\subset \Omega$ and every $1\leq i \leq n$
	\begin{equation}\label{eq:tangtang_interior}
		r^2\fint_{B} |\partial _i V(\eps u)|^2 \,dx \leq  c\fint_{2B} \phi^\ast(r^2|\nabla f|)+\phi(|\eps u|)\,dx,
	\end{equation}
	where $c>0$ does only depend on $n$, $p^-$, $p^+$.
	Furthermore, if $B^+$ is a cylinder at the boundary of $\Omega$, $1\leq \alpha \leq n-1$ and $\eta$ a cut-off function supported on $2B^+$ (see Section~\ref{sec:flattening-boundary}), then
	\begin{equation}\label{eq:tangtang}
		\begin{aligned}
			r^2\fint_{2B^+} \eta^2 |\partial_\alpha V(\eps u)|^2 \,dx \leq& ~  c\fint_{2B^+} \phi^\ast(r^2|\nabla f|)+\phi(|\eps u|)\,dx\\
			&+\kappa~\|\nabla a\|^2_{L^\infty}r^2\fint_{2B^+}\eta^2|\nabla V(\eps u)|^2\,dx,
		\end{aligned}
	\end{equation}
	where $c>0$ does depend on $n$, $p^-$, $p^+$ and the parametrization $a$ of the boundary, while $\kappa>0$ depends only on $n$, $p^-$, $p^+$ and is independent of $a$ .
\end{lemma}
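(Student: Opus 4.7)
The plan is to differentiate the equation once (along the Cartesian direction~$i$ in the interior, along the tangential curved direction $\widetilde{\partial}_\alpha$ at the boundary), pair the result against a suitable cut-off of $\partial u$, and use the pointwise equivalences~\eqref{eq:AV_basic_differentiated} and~\eqref{eq:A-V_for_curved} to identify $\int \eta^2 |\partial V(\eps u)|^2\,dx$ as the principal left-hand term. The Orlicz Korn inequalities (Proposition~\ref{prop:Korn} in the interior, Proposition~\ref{prop:KornBoundary} at the boundary) then convert the remaining gradient-type remainders into $\eps u$-type ones. Since $u \in W^{2,2}(\Omega)$ by the regularity quoted just after Proposition~\ref{pro:nondegen}, all derivatives below are genuine $L^2$-functions and the required test functions are admissible.

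For the interior estimate~\eqref{eq:tangtang_interior} I would fix $1 \le i \le n$, set $c \coloneqq \mean{\partial_i u}_{2B} \in \RRn$, and test~\eqref{eq:weak_solution} with $\psi \coloneqq -\partial_i(\eta^2 (\partial_i u - c)) \in W^{1,2}_0(2B)$. Two integrations by parts yield
\begin{equation*}
  \int \eta^2\, \partial_i A(\eps u) : \partial_i \eps u\,dx + 2 \int \eta\, \partial_i A(\eps u) : \nabla \eta \otimes_\eps (\partial_i u - c)\,dx = \int \eta^2\, \partial_i f \cdot (\partial_i u - c)\,dx,
\end{equation*}
and~\eqref{eq:AV_basic_differentiated} identifies the principal term on the left with $\int \eta^2 |\partial_i V(\eps u)|^2\,dx$ up to constants depending only on $p^-, p^+$. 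The cross term is split by a weighted Young inequality with the pairing $\eta \sqrt{\phi''(|\eps u|)} |\partial_i \eps u| \cdot |\nabla \eta| \sqrt{\phi''(|\eps u|)} |\partial_i u - c|$, producing an absorbable piece $\lambda\, \eta^2 |\partial_i V|^2$ plus a remainder $C_\lambda\, |\nabla \eta|^2 \phi''(|\eps u|) |\partial_i u - c|^2$; the $f$-term is handled by the Orlicz--Young inequality $r^2 |\partial_i f|\, |\partial_i u - c| \le \phi^\ast(r^2 |\partial_i f|) + \phi(|\partial_i u - c|)$. Multiplying through by $r^2$ and using $r \|\nabla \eta\|_\infty \lesssim 1$, the argument reduces to the two bounds
\begin{equation*}
  \int_{2B} \phi(|\partial_i u - c|)\,dx + \int_{2B} \phi''(|\eps u|)\, |\partial_i u - c|^2\,dx \lesssim \int_{2B} \phi(|\eps u|)\,dx.
\end{equation*}
The first summand is immediate from $|\partial_i u - c| \le |\nabla u - \mean{\nabla u}_{2B}|$, Proposition~\ref{prop:Korn} and $\phi \in \Delta_2$.

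The second, weighted summand is the main obstacle. Pointwise one has $\phi''(a) t^2 \lesssim \phi(a) + \phi(t)$ when $p^- \ge 2$, but this fails for $p^- < 2$ when $t \gg a$, and the proper framework is the shifted N-function $\phi_{|\eps u|}(t) \eqsim \phi''(|\eps u|+t)\, t^2$ of Appendix~\ref{sec:unif-conv-orlicz-appendix} combined with its associated shifted Korn-type estimates. I would split along $\{|\partial_i u - c| \le |\eps u|\}$ --- where $\phi''(|\eps u|) |\partial_i u - c|^2 \le \phi''(|\eps u|) |\eps u|^2 \eqsim \phi(|\eps u|)$ is automatic --- and treat the complement via the shifted machinery. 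The whole argument must (and does) go through with constants depending only on $n$, $p^-$ and $p^+$, and in particular \emph{independent} of the quadratic growth parameter $\delta$ of Definition~\ref{def:phi-quadratic}; this $\delta$-independence is exactly what makes the approximation of Section~\ref{sec:passage-limit} cover the general uniformly convex case.

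For the boundary estimate~\eqref{eq:tangtang} I would run the same programme with $\widetilde{\partial}_\alpha$ replacing $\partial_i$ and test with $\psi \coloneqq -\widetilde{\partial}_\alpha(\eta^2 \widetilde{\partial}_\alpha u)$; no shift by a constant is needed because $u = 0$ on $\partial \Omega \cap \overline{2B^+}$ forces $\widetilde{\partial}_\alpha u$ to vanish there, so the boundary Korn inequality (Proposition~\ref{prop:KornBoundary}) replaces Proposition~\ref{prop:Korn}. The curved integration by parts~\eqref{eq:ibp_curved} and the commutator identities~\eqref{eq:curved_deriv_commutators} reproduce the interior scheme with two new families of error terms: those proportional to $\partial_\alpha \nabla a$, which are bounded by $\|a\|_{C^{2,1}}$ and absorbed into the $a$-dependent constant $c$, and those proportional to $\partial_\alpha a$. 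Combining the second kind with the pointwise consequence $|\partial_\alpha V|^2 \le 2 |\widetilde{\partial}_\alpha V|^2 + 2 \|\nabla a\|_{L^\infty}^2 |\partial_n V|^2$ of the defining relation $\widetilde{\partial}_\alpha f = \partial_\alpha f + \partial_\alpha a\, \partial_n f$ leaves exactly the unabsorbed remainder $\kappa\, \|\nabla a\|_{L^\infty}^2\, r^2 \fint_{2B^+} \eta^2 |\nabla V(\eps u)|^2\,dx$ with $\kappa$ depending only on $n$, $p^-$ and $p^+$. This remainder is retained on the right-hand side to be absorbed later, once Lemma~\ref{lem:normalnormal} controls $|\partial_n V(\eps u)|^2$ by the tangential derivatives.
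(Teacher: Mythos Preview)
Your overall plan coincides with the paper's: test the (pointwise) equation with $\widetilde{\partial}_\alpha(\eta^2\,\widetilde{\partial}_\alpha u)$ at the boundary (respectively $\partial_i(\eta^2\,\partial_i u)$ in the interior), identify the principal term via the $A$--$V$ equivalences~\eqref{eq:AV_basic_differentiated}, \eqref{eq:A-V_for_curved}, peel off the cross and curvature commutator terms by a weighted Young inequality, and finish with the Orlicz--Korn inequalities. Your description of the curved-derivative commutators, the $\partial_\alpha\nabla a$ versus $\partial_\alpha a$ error terms, and the final passage from $\widetilde{\partial}_\alpha V$ to $\partial_\alpha V$ matches the paper's proof exactly.

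The one substantive divergence is your treatment of the weighted lower-order remainder
\[
\int \phi''(|\eps u|)\,|\partial_i u - c|^2\,dx
\quad\text{resp.}\quad
\int \phi''(|\eps u|)\,|\nabla u|^2\,dx.
\]
You single this out as ``the main obstacle'' and propose a level-set split combined with shifted N-function machinery that you do not spell out. The paper instead dispatches this term in a single pointwise step (equation~\eqref{eq:curvature_error_term2}): from $|\eps u|\le|\nabla u|$ it writes
\[
\phi''(|\eps u|)\,|\nabla u|^2 \;\le\; \phi''(|\nabla u|)\,|\nabla u|^2 \;\eqsim\; \phi(|\nabla u|),
\]
and then applies Korn (Proposition~\ref{prop:KornBoundary}) to arrive at $\int\phi(|\eps u|)$. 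With this device no shifted N-functions enter, and the paper does not subtract any mean in the interior case either; it simply remarks that the interior estimate follows by the same computation with $\partial_i$ in place of $\widetilde{\partial}_\alpha$. So your argument is on the right track but takes a longer, only partially specified detour precisely where the paper uses a one-line pointwise bound followed by Korn.
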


\begin{proof}
	Let us first fix a cylinder $B^+$ at the boundary of $\Omega$. Multiplying the pointwise equation \eqref{eq:pw_equation} with $\psi \coloneqq  \widetilde{\partial}_\alpha (\eta ^2 \widetilde{\partial}_\alpha u)$ and integrating over $\Omega$ we get
	\begin{equation}\label{eq:proof_tangtang_start}
		\fint_{2B^+} \divergence A(\eps u): \widetilde{\partial}_\alpha (\eta ^2 \widetilde{\partial}_\alpha u)\,dx =\fint_{2B^+}f~ \widetilde{\partial}_\alpha (\eta ^2 \widetilde{\partial}_\alpha u) \,dx.
	\end{equation}
	We estimate the right hand side in \eqref{eq:proof_tangtang_start} using the integration by parts formula \eqref{eq:ibp_curved}, Young's inequality and Korn's inequality:
	\begin{align*}
		\fint_{2B^+}f~ \widetilde{\partial}_\alpha (\eta ^2 \widetilde{\partial}_\alpha u) \,dx&=-\fint_{2B^+}\widetilde{\partial}_\alpha f~ \eta ^2 \widetilde{\partial}_\alpha u \,dx \\
		&\lesssim r^{-2}\fint_{2B^+} \phi^\ast(r^2|\nabla f|) + \phi (|\nabla u|)\,dx\\
		&\lesssim r^{-2}\fint_{2B^+} \phi^\ast(r^2|\nabla f|)+\phi(|\eps u|) \,dx.
	\end{align*}
	The left hand side in \eqref{eq:proof_tangtang_start} can be rewritten using \eqref{eq:curved_deriv_commutators} as
	\begin{align*}
		\fint_{2B^+} \divergence A(\eps u): \widetilde{\partial}_\alpha (\eta ^2 \widetilde{\partial}_\alpha u)\,dx &=\fint_{2B^+} \widetilde{\partial}_\alpha A(\eps u):\eps (\eta ^2 \widetilde{\partial}_\alpha u)\,dx\\
		&\quad +\fint_{2B^+} \partial_\alpha \nabla a \cdot \partial_n A(\eps u) \eta^2 \widetilde{\partial}_\alpha u  \,dx\coloneqq T_1+T_2.
	\end{align*}
	We now give detailed explanation of how to estimate the term $T_2$ which appears due to the curved boundary. Subsequently, similar terms will appear and we will just refer back to the calculations here. We have for every $\lambda>0$
	\begin{equation}\label{eq:curvature_error_term1}
		\begin{aligned}
			|T_2|&\leq \|\nabla^2 a\|_{L^\infty}\fint_{2B^+} \eta^2\phi^{\prime \prime}(|\eps u|) |\partial_n \eps u||\nabla u|\,dx\\
			&\leq \dfrac{1}{2}\|\nabla^2 a\|_{L^\infty}\fint_{2B^+} \eta^2\phi^{\prime \prime}(|\eps u|) \big(\lambda|\partial_n \eps u|^2+\lambda^{-1}|\nabla u|^2\big)\,dx\\
			&\lesssim \lambda \fint_{2B^+}\eta^2|\nabla V(\eps u)|^2 \,dx+\lambda^{-1}\fint_{2B^+} \phi^{\prime\prime}(|\eps u|)|\nabla u|^2\,dx.
		\end{aligned}
	\end{equation}
	The second integral can be estimated using the pointwise estimate $|\eps u|\leq |\nabla u|$ and Korn's inequality:
	\begin{equation}\label{eq:curvature_error_term2}
		\begin{aligned}
			\fint_{2B^+} \phi^{\prime\prime}(|\eps u|)|\nabla u|^2\,dx&\leq  \fint_{2B^+} \phi^{\prime\prime}(|\nabla u|)|\nabla u|^2\,dx \\
			&\lesssim \fint_{2B^+} \phi(|\nabla u|)\,dx\lesssim \fint_{2B^+} \phi(|\eps u|)\,dx .
		\end{aligned}
	\end{equation}
	We now turn to $T_1$. We have
	\begin{align*}
		T_1&=\fint_{2B^+} \eta ^2\widetilde{\partial}_\alpha A(\eps u): \eps~\widetilde{\partial}_\alpha u\,dx+\fint_{2B^+} \widetilde{\partial}_\alpha A(\eps u):\nabla \eta^2 \otimes_\eps \widetilde{\partial}_\alpha u\,dx\\
		&=\fint_{2B^+} \eta ^2\widetilde{\partial}_\alpha A(\eps u): \widetilde{\partial}_\alpha \eps u\,dx + \fint_{2B^+} \eta ^2\widetilde{\partial}_\alpha A(\eps u): \partial_\alpha \nabla a \otimes_\eps \partial_n u\,dx\\
		&\quad + \fint_{2B^+} \widetilde{\partial}_\alpha A(\eps u):\nabla (\eta^2) \otimes_\eps \widetilde{\partial}_\alpha u\,dx\coloneqq  T_{1,1}+T_{1,2}+T_{1,3}.
	\end{align*}
	Using \eqref{eq:A-V_for_curved} we have
	\begin{equation*}
		T_{1,1}\eqsim \fint_{2B^+}  \eta^2|\widetilde{\partial}_\alpha V(\eps u)|^2\,dx.
	\end{equation*}
	We can estimate $T_{1,2}$ exactly like we estimated $T_2$ in \eqref{eq:curvature_error_term1}. Using $|\nabla \eta|\lesssim r^{-1}$ and \eqref{eq:curvature_error_term2}, the term $T_{1,3}$ can be bounded as follows.
	\begin{align*}
		|T_{1,3}|&\lesssim \fint_{2B^+} \phi^{\prime \prime}(|\eps u|) \eta |\nabla \eps u|r^{-1} |\nabla u|\,dx\\
		&\lesssim \lambda \fint_{2B^+} \eta^2 |\nabla V(\eps u)|^2 \,dx+ \lambda^{-1}r^{-2}\fint_{2B^+} \phi(|\eps u|)\,dx.
	\end{align*}
	Summarizing our findings so far (and choosing $\lambda$ small enough), we have shown
	\begin{equation*}
		\begin{aligned}
			r^2\fint_{B^+} |\widetilde{\partial}_\alpha V(\eps u)|^2 \,dx \leq& ~  c\fint_{2B^+} \phi^\ast(r|f|)+\phi^\ast(r^2|\nabla f|)+\phi(|\eps u|)\,dx\\
			&+\widetilde{c}~\|\nabla a\|_{L^\infty}\fint_{2B^+}|\nabla V(\eps u)|^2\,dx.
		\end{aligned}
	\end{equation*}
	To conclude \eqref{eq:tangtang} from here we simply note that
	\begin{equation*}
		|\partial_\alpha V(\eps u)|\leq |\widetilde{\partial} _\alpha V(\eps u)|+\|\nabla a\|_{L^\infty}|\nabla V(\eps u)|.
	\end{equation*}
	Thus, \eqref{eq:tangtang} follows. The interior estimate \eqref{eq:tangtang_interior} can be shown in the same way but with many simplifications, since we do not need to use the curved derivatives~$\widetilde{\partial}_\alpha $.
\end{proof}

\begin{lemma}\label{lem:normaltang}
	Let $B^+$ be a cylinder at the boundary and $\eta$ a cut-off function supported on $2B^+$. Let $1\leq i,j \leq n$ with $(i,j)\neq (n,n)$. Then
	\begin{equation}
		\begin{aligned}
			r^2\fint_{2B^+} \eta^2 |\partial _i A(\eps u)||\partial _j \eps u| \, dx &\leq c\fint_{2B^+} \phi^\ast(r^2|\nabla f|)+\phi(|\eps u|)\,dx\\
			&\quad +r^2\kappa \|\nabla a\|_{L^\infty}\fint_{2B^+} \eta^2|\nabla V(\eps u)|^2\,dx ,
		\end{aligned}
	\end{equation}
	where $c>0$ depends only on $p^-$, $p^+$, $n$ and the parametrization $a$ of the boundary, while $\kappa >0$ depends only on $p^-$, $p^+$, $n$ and is independent of $a$.
\end{lemma}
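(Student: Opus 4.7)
The plan is to reduce the estimate to Lemma~\ref{lem:tangtang} via a single weighted Young inequality. The key observation is that since $(i,j)\neq(n,n)$, at least one of the indices is tangential. Applying the pointwise equivalences \eqref{eq:AV_basic_differentiated} to each factor I obtain
\[
  |\partial_i A(\eps u)|\,|\partial_j \eps u|\eqsim \phi''(|\eps u|)\,|\partial_i\eps u|\,|\partial_j\eps u|\eqsim |\partial_i V(\eps u)|\,|\partial_j V(\eps u)|,
\]
with implicit constants depending only on $p^-$, $p^+$, $n$. This final expression is symmetric in its two indices, so I may relabel to assume $j=\alpha\in\{1,\dots,n-1\}$ is tangential.

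Next, I apply Young's inequality with weight $\mu\coloneqq\|\nabla a\|_{L^\infty}$ (assuming $\|\nabla a\|_{L^\infty}>0$; the flat case is immediate from Lemma~\ref{lem:tangtang}) and bound $|\partial_i V(\eps u)|^2\leq |\nabla V(\eps u)|^2$:
\[
  |\partial_i V(\eps u)|\,|\partial_\alpha V(\eps u)|\leq \tfrac{\|\nabla a\|_{L^\infty}}{2}|\nabla V(\eps u)|^2+\tfrac{1}{2\|\nabla a\|_{L^\infty}}|\partial_\alpha V(\eps u)|^2.
\]
Multiplying by $r^2\eta^2$ and averaging over $2B^+$, the first summand already fits inside the last term of the target inequality, contributing a constant to $\kappa$ that is independent of $a$.

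For the second, tangential summand I invoke \eqref{eq:tangtang}:
\begin{equation*}
  \tfrac{r^2}{2\|\nabla a\|_{L^\infty}}\fint_{2B^+}\eta^2|\partial_\alpha V(\eps u)|^2\,dx \lesssim \tfrac{1}{\|\nabla a\|_{L^\infty}}\fint_{2B^+}\phi^\ast(r^2|\nabla f|)+\phi(|\eps u|)\,dx+\kappa\,\|\nabla a\|_{L^\infty}\,r^2\fint_{2B^+}\eta^2|\nabla V(\eps u)|^2\,dx,
\end{equation*}
where one factor of $\|\nabla a\|_{L^\infty}$ from the $\|\nabla a\|_{L^\infty}^2$ in \eqref{eq:tangtang} cancels against the $1/\|\nabla a\|_{L^\infty}$ out front, leaving exactly the linear dependence required. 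The residual factor $1/\|\nabla a\|_{L^\infty}$ on the data integrals is absorbed into the constant $c$ of the lemma, which is allowed to depend on the parametrization~$a$.

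The argument is essentially bookkeeping once Lemma~\ref{lem:tangtang} is in hand; the only delicate point is the choice of Young weight $\mu=\|\nabla a\|_{L^\infty}$, tuned precisely so that the quadratic factor $\|\nabla a\|_{L^\infty}^2$ produced by the tangential estimate reduces to the linear factor $\|\nabla a\|_{L^\infty}$ demanded here, with $\kappa$ remaining independent of $a$.
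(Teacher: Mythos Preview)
Your proof is correct and follows essentially the same approach as the paper: reduce via \eqref{eq:AV_basic_differentiated} to the product $\phi''(|\eps u|)\,|\partial_i\eps u|\,|\partial_j\eps u|\eqsim |\partial_i V(\eps u)|\,|\partial_j V(\eps u)|$, apply Young's inequality with weight $\|\nabla a\|_{L^\infty}$ so that the tangential factor receives $\|\nabla a\|_{L^\infty}^{-1}$ and the remaining factor receives $\|\nabla a\|_{L^\infty}$, and then invoke Lemma~\ref{lem:tangtang} on the tangential term. The paper phrases the reduction as ``assume $i\neq n$; the other case is analogous'' rather than invoking the symmetry of $|\partial_i V|\,|\partial_j V|$, but the computation is the same.
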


\begin{proof}
	Assume $i\neq n$. The case $j\neq n$ can be treated in exactly the same way. By~\eqref{eq:AV_basic_differentiated} we have for every $\lambda>0$
	\begin{align*}
		&\fint_{2B^+} \eta^2 |\partial _i A(\eps u)||\partial _j \eps u| \, dx \lesssim \fint_{2B^+} \eta ^2 \phi^{\prime \prime}(|\eps u|) |\partial_i \eps u|~|\partial _j \eps u|\, dx\\
		&\qquad\leq \dfrac{1}{2\lambda}  \fint_{2B^+} \eta ^2 \phi^{\prime \prime}(|\eps u|) |\partial_i \eps u|^2\,dx +\dfrac{\lambda}{2} \fint_{2B^+} \eta ^2 \phi^{\prime \prime}(|\eps u|) |\partial_j \eps u|^2\,dx\\
		&\qquad\lesssim  \lambda^{-1}\fint_{2B^+} \eta^2 |\partial_i V(\eps u)|^2\,dx+\lambda \fint_{2B^+} \eta^2 |\partial_j V(\eps u)|^2\,dx.
	\end{align*}
	We can now use Lemma \ref{lem:tangtang} to bound the first integral. The choice $\lambda = \|\nabla a\|_{L^\infty}$ finishes the proof.
\end{proof}

\begin{lemma}\label{lem:normalnormal}
	Let $2B^+$ be a cylinder at the boundary and $\eta$ a cut-off function supported on $2B^+$.  Then
	\begin{equation}\label{eq:normalnormal_claim}
		\begin{aligned}
			r^2\fint_{2B^+} \eta^{2} |\partial_n V(\eps u)|^2 \, dx &\leq 
			c\fint_{2B^+}\phi^\ast(r|f|)+ \phi^\ast(r^2|\nabla f|)+\phi(|\eps u|)\,dx\\
			&+r^2\kappa \|\nabla a\|_{L^\infty}\fint_{2B^+} \eta^{2} |\nabla V(\eps u)|^2\,dx,\\
		\end{aligned}
	\end{equation}
	where $c>0$ depends on $p^-$, $p^+$, $n$ and the parametrization $a$ of the boundary, while $\kappa$ depends only on $p^-$, $p^+$, $n$ and is independent of $a$.
\end{lemma}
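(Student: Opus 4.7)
The plan is to extract the normal-normal derivative from the pointwise equation~\eqref{eq:pw_equation}, since $\partial_n(\eps u)_{nn}$ is the only component of $\partial_n\eps u$ for which no tangential reduction is directly available, and since the would-be test function $\partial_{nn}u$ is forbidden by the Dirichlet condition. The key is a structural observation about $A' = DA$: from $A(P) = \phi'(|P|)P/|P|$ one computes
\begin{equation*}
A'_{ij,kl}(P) = \phi''(|P|)\frac{P_{ij}P_{kl}}{|P|^{2}} + \frac{\phi'(|P|)}{|P|}\Bigl(\tfrac12(\delta_{ik}\delta_{jl}+\delta_{il}\delta_{jk})-\frac{P_{ij}P_{kl}}{|P|^{2}}\Bigr),
\end{equation*}
and the uniform-convexity equivalence $\phi'(t)/t \eqsim \phi''(t)$ yields $A'_{nn,nn}(P) \eqsim \phi''(|P|)$ together with $|A'_{ij,kl}(P)| \lesssim \phi''(|P|)$, with constants depending only on $p^-, p^+$.

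The $n$-th component of the pointwise equation reads
\begin{equation*}
\partial_n A_{nn}(\eps u) = -f_n - \sum_{\alpha<n}\partial_\alpha A_{n\alpha}(\eps u).
\end{equation*}
Expanding the left via the chain rule and isolating the $(n,n)$-entry using the non-degeneracy above, I obtain the pointwise bound
\begin{equation*}
\phi''(|\eps u|)|\partial_n(\eps u)_{nn}|^{2} \lesssim \frac{|f|^{2}}{\phi''(|\eps u|)} + \frac{1}{\phi''(|\eps u|)}\sum_{\alpha<n}|\partial_\alpha A(\eps u)|^{2} + \phi''(|\eps u|)\!\!\sum_{(l,m)\neq(n,n)}\!\!|\partial_n(\eps u)_{lm}|^{2}.
\end{equation*}
Multiplied by $r^{2}\eta^{2}$ and integrated over $2B^+$, the first term yields the $\phi^{*}(r|f|)$ contribution via Young's inequality; the second equals $r^{2}\fint\eta^{2}|\partial_\alpha V(\eps u)|^{2}$ up to constants and is bounded by Lemma~\ref{lem:tangtang}.

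For the third term, each summand $\phi''|\partial_n(\eps u)_{lm}|^{2}$ with $(l,m)\neq (n,n)$ is controlled by the mixed product $\phi''|\partial_n \eps u||\partial_\alpha \eps u|$ (for a tangential index $\alpha$ extracted from the algebraic identity $\partial_n(\eps u)_{lm} = \partial_l(\eps u)_{nm} + \partial_m(\eps u)_{nl} - \partial_l\partial_m u_n$ applied with at least one of $l,m$ strictly less than $n$), which is exactly the quantity bounded by Lemma~\ref{lem:normaltang}. The residual tangential-tangential second derivative $\partial_l\partial_m u_n$ is split via AM-GM between a tangential $V$-term (absorbed into Lemma~\ref{lem:tangtang}) and a small factor of $r^{2}\fint\eta^{2}|\nabla V(\eps u)|^{2}$, producing the $r^{2}\kappa\|\nabla a\|_{L^\infty}\fint\eta^{2}|\nabla V(\eps u)|^{2}$ curvature correction exactly as in Lemma~\ref{lem:normaltang}. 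The passage between straight and curved derivatives $\widetilde\partial_\alpha$ and Korn's inequality yield the $\fint\phi(|\eps u|)$ contribution.

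I expect the main obstacle to be the clean algebraic isolation of $\partial_n(\eps u)_{nn}$ together with the careful reduction of the residual off-diagonal contributions: the non-degeneracy $A'_{nn,nn}\eqsim\phi''$ must be quantitative with constants independent of the quadratic-growth parameter $\delta$ (otherwise the approximation argument of Section~\ref{sec:passage-limit} would fail), and one has to make sure that every remaining piece is either a tangential $V$-derivative (handled by Lemma~\ref{lem:tangtang}), a mixed product of $V$-derivatives (handled by Lemma~\ref{lem:normaltang}), or a curvature term that fits into the $\|\nabla a\|_{L^\infty}\fint\eta^{2}|\nabla V|^{2}$ bucket allowed on the right-hand side.
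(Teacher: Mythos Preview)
Your proposal has a genuine gap. The pointwise isolation of $\partial_n(\eps u)_{nn}$ via the $n$-th component of the equation is correct as far as it goes, but the quantity in the lemma is $|\partial_n V(\eps u)|^{2}\eqsim\phi''(|\eps u|)\sum_{l,m}|\partial_n(\eps u)_{lm}|^{2}$, so you must also control $\phi''|\partial_n(\eps u)_{lm}|^{2}$ for all $(l,m)\neq(n,n)$. Your reduction of these fails in two places. First, for $(l,m)=(\alpha,n)$ with $\alpha<n$ the identity $\partial_n(\eps u)_{lm}=\partial_l(\eps u)_{nm}+\partial_m(\eps u)_{nl}-\partial_l\partial_m u_n$ collapses to a tautology (since $(\eps u)_{n\alpha}=(\eps u)_{\alpha n}$ and $\partial_n u_n=(\eps u)_{nn}$), so no tangential reduction is obtained. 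Second, for $(l,m)=(\alpha,\beta)$ with $\alpha,\beta<n$ the identity produces the residual $\partial_{\alpha\beta}u_n$, and the resulting term $\phi''(|\eps u|)\,|\partial_{\alpha\beta}u_n|^{2}$ cannot be bounded by tangential $V$-derivatives or by Lemma~\ref{lem:normaltang}: $\partial_\beta u_n$ is a full-gradient component, not a component of $\eps u$, and there is no Korn-type inequality converting $\int\phi''(|\eps u|)|\partial_\alpha\nabla u|^{2}$ into $\int\phi''(|\eps u|)|\partial_\alpha\eps u|^{2}$ with a variable weight. Moreover this term carries no small factor $\|\nabla a\|_{L^\infty}$ (it is present already on a flat boundary), so it cannot be placed in the curvature bucket.

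The paper avoids this by never squaring pointwise. It starts from the integral $\fint\eta^{2}\partial_n A(\eps u):\partial_n\eps u\eqsim\fint\eta^{2}|\partial_n V(\eps u)|^{2}$ and decomposes it as $S_1+S_2+S_3$ according to the index blocks. In $S_2$ and $S_3$ the pointwise equation (all $n$ components, not only the $n$-th) replaces $\partial_n A_{\cdot n}$ by $f$ plus tangential derivatives of $A$; the $f$-terms are handled by integration by parts in $x_n$ (using $f=0$ on $\partial\Omega$). In $S_1$ the same algebraic identity is applied to $\partial_n\eps_{\alpha\beta}u$ but at the \emph{linear} level under the integral, so the troublesome piece reads $\fint\eta^{2}\partial_n A(\eps u)_{\alpha\beta}\,\partial_{\alpha\beta}u_n$. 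Two integrations by parts (moving $\partial_n$ off $A$ and $\partial_\alpha$ off $u_n$) convert this into $\fint\eta^{2}\partial_\alpha A(\eps u)_{\alpha\beta}\,\partial_\beta\eps_{nn}u$ plus lower-order terms from derivatives of $\eta$; these are now tangential or mixed and fall to Lemmas~\ref{lem:tangtang} and~\ref{lem:normaltang}. This integration-by-parts step is precisely what your pointwise-squared approach forfeits.
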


\begin{proof}
	We adopt the method used in \cite{SerShi97}. Recall that we use the convention that the greek indices $\alpha$ and $\beta$ run only over the tangential directions $1,...,n-1$. We have
	\begin{align*}
		\fint_{2B^+} \eta^2 \partial _n A(\eps u):\partial _n \eps u ~ dx &= \sum_{\alpha,\beta} \fint_{2B^+} \eta^2 \partial _n A(\eps u)_{\alpha \beta}\partial _n \eps _{\alpha \beta}u ~ dx \\
		&+2\sum_{\alpha} \fint_{2B^+} \eta^2 \partial _n A(\eps u)_{\alpha n}\partial _n \eps_{\alpha n}u ~ dx \\
		&+\fint_{2B^+} \eta^2 \partial _n A(\eps u)_{n n}\partial _n \eps_{n n}u ~ dx\\
		&\coloneqq  S_1+S_2+S_3.		
	\end{align*}
	We start by estimating $S_2$. Using the pointwise equation \eqref{eq:pw_equation} we have
	\begin{align*}
		\dfrac{1}{2}S_2=-\sum_{\alpha \beta}\fint_{2B^+}\eta^2 \partial_\beta A(\eps u)_{\alpha \beta} \partial_n \eps_{\alpha n} u\,dx+ \sum_{\alpha} \fint_{2B^+} \eta ^2 f_\alpha \partial_n \eps_{\alpha n} u  \,dx.
	\end{align*}
	The first sum can be estimated using Lemma \ref{lem:normaltang}. To estimate the second sum we use integration by parts. Because $\eta^2 f$ vanishes on all of $\partial (2B^+)$, we get no boundary terms. Thus
	\begin{equation*}\label{eq:normalnormal_f-terms}
		\begin{aligned}
			\fint_{2B^+} \eta^2 f_\alpha \partial_n \eps_{\alpha n} u \,dx&= -\fint_{2B^+} (\eta^2 \partial_n f_\alpha +\partial_n (\eta^2) f_\alpha) \eps_{\alpha n} u \,dx \\
			&\lesssim \fint_{2B^+} |\eps u||\nabla f| + r^{-1} |\eps u| |f|  \,dx.
		\end{aligned}
	\end{equation*}
	Using Young's inequality \ref{eq:young}, we can estimate both summands. Indeed, we have
	\begin{equation*}
		|\eps u||\nabla f| \lesssim r^{-2}\big(\phi(|\eps u|)+  \phi^\ast(r^2|\nabla f|)\big),
	\end{equation*}
	and
	\begin{equation*}
		r^{-1}|\eps u||f| \lesssim r^{-2} \big(\phi(|\eps u|)+\phi^\ast(r|f|)\big).
	\end{equation*}
	We thus have shown
	\begin{equation*}
		S_2\leq \dfrac{c}{r^2}\fint_{2B^+} \phi^\ast(r|f|)+\phi^\ast(r^2|\nabla f|)+\phi(|\eps u|)\,dx .
	\end{equation*}
	We can bound $S_3$ using the same strategy. We are thus left with bounding $S_1$. For this we use the identity
	\begin{equation*}
		\partial_n \eps_{\alpha \beta}u= \partial_\alpha \eps_{\beta n}u + \partial_\beta \eps_{\alpha n}u-  \partial_{\alpha \beta} u_n.
	\end{equation*}
	This identity allows us to split $S_1$ into three terms, the first two of which can again be estimated using Lemma \ref{lem:normaltang}. This leaves us only with the third term, which we can write as
	\begin{equation}
		\sum_{\alpha \beta}\fint_{2B^+} \eta^2 \partial_n A(\eps u)_{\alpha \beta}  \partial_{\alpha\beta}u_n\,dx = A_1+A_2+A_3+A_4,
	\end{equation}
	where 
	\begin{align*}
		A_1&\coloneqq  \sum_{\alpha \beta}\fint_{2B^+} \partial_\alpha A(\eps u)_{\alpha \beta}\partial_n \eta^2 \partial_\beta u_n \,dx,\\
		A_2&\coloneqq  \sum_{\alpha \beta}\fint_{2B^+} \partial_\alpha A(\eps u)_{\alpha \beta} \eta^2 \partial_\beta \eps_{nn} u \,dx,\\
		A_3&\coloneqq  \sum_{\alpha \beta}\fint_{2B^+} A(\eps u)_{\alpha \beta}\partial_{\alpha n} \eta^2 \partial_\beta u_n \,dx,\\
		A_4&\coloneqq  \sum_{\alpha \beta}\fint_{2B^+} A(\eps u)_{\alpha \beta}\partial_{\alpha } \eta^2 \partial_\beta \eps_{nn} u \,dx .
	\end{align*}
	Here we used integration by parts to move the $\partial_n$ away from $A(\eps u)$ and the $\partial_\alpha$ away from $u_n$, and the fact that $\partial_n u_n=\eps_{nn}u$. Using Young's inequality \eqref{eq:young} we estimate
	\begin{align*}
		|A_1|&\lesssim r^{-1}\sum_\alpha \fint_{2B^+} \phi^{\prime \prime}(|\eps u|)|\partial_\alpha \eps u| |\nabla u| \,dx\\
		&\leq \dfrac{1}{2}\sum_\alpha \fint_{2B^+} \phi^{\prime \prime}(|\eps u|)\big(|\partial_\alpha \eps u|^2+r^{-2}|\nabla u|^2\big) \,dx \\
		&\lesssim \sum_\alpha\fint_{2B^+} |\partial_\alpha V(\eps u)|^2 + r^{-2}\phi^\prime(|\eps u|) \,dx,
	\end{align*}
	where we used \eqref{eq:AV_basic_differentiated} and \eqref{eq:curvature_error_term2}. For $A_2$ we have
	\begin{align*}
		|A_2|&\lesssim \sum_{\alpha \beta} \fint_{2B^+} \phi^{\prime\prime} (|\eps u|)|\partial _\alpha \eps u||\partial_\beta \eps u|\,dx\\
		&\leq \dfrac{1}{2}\sum_{\alpha \beta}\fint_{2B^+} \phi^{\prime\prime} (|\eps u|) \big(|\partial_\alpha\eps u|^2+|\partial_\beta\eps u|^2\big)\,dx.
	\end{align*}
	Both terms can be estimated with the help of Lemma \ref{lem:tangtang}. We can estimate $A_3$ as in \eqref{eq:curvature_error_term2}
	\begin{align*}
		|A_3|&\lesssim r^{-2}\fint_{2B^+} \phi^\prime(|\eps u|) |\nabla u|\,dx\\
		&\lesssim r^{-2}\fint_{2B^+} \phi(|\eps u|)\,dx.
	\end{align*}
	Finally, for $A_4$ we have
	\begin{align*}
		|A_4|&\lesssim \sum_\beta r^{-1} \fint_{2B^+} \phi^\prime(|\eps u|) |\partial_\beta \eps u|\,dx \\
		&\lesssim r^{-1}\sum_\beta \fint_{2B^+} \phi^{\prime \prime}(|\eps u|) |\partial_\beta \eps u||\eps u|\,dx\\
		&\lesssim \sum_\beta \fint_{2B^+}  \phi^{\prime\prime}(|\eps u|)\big(|\partial_\beta \eps u|^2 + |\eps u|^2 \big) \,dx\\
		&\lesssim \sum_\beta\fint_{2B^+}|\partial_\beta V(\eps u)|^2\,dx+r^{-2}\fint_{2B^+}\phi^\prime(|\eps u|) \,dx.
	\end{align*}
	Collecting our estimates for $A_1$,...,$A_4$, we get the claimed estimate for $S_1$ which together with the estimates for $S_2$ and $S_3$ completes the proof.	
\end{proof}

We are now ready to prove the main Proposition of this section.

\begin{proof}[Proof of Proposition~\ref{pro:nondegen}]
	For every cylinder $B^+$ at the boundary and every cut-off function $\eta$ supported on $2B^+$ we know by Lemma \ref{lem:tangtang} and Lemma \ref{lem:normalnormal} that there exists a $\kappa >0$ independent of the local parametrization $a$ of the boundary such that
	\begin{align*}
		r^2\fint_{2B^+} \eta^2 |\nabla V(\eps u)|^2\,dx &\leq c \fint_{2B^+}\phi^\ast(r|f|)+ \phi^\ast(r^2|\nabla f|)+\phi(|\eps u|)\,dx\\
		&\quad+n\kappa \|\nabla a\|_{L^\infty}r^2\fint_{2B^+} \eta^2 |\nabla V(\eps u)|^2\,dx.
	\end{align*}
	If we choose $B^+$ sufficiently small we can assume that
	\begin{equation}\label{eq:bound_nabla_a}
		\| \nabla a \|_{L^\infty (2B_j^+)} \leq \dfrac{1}{2n\kappa}.
	\end{equation}
	With this assumption and using $r<\diameter (\Omega)$ and Lemma~\ref{lem:simonenko} we get
	\begin{equation}\label{eq:final_bound_bdry}
		r^2\int_{2B^+} \eta^2 |\nabla V(\eps u)|^2\,dx \leq \widetilde{c}~ \int_{2B^+}\phi^\ast(|f|)+ \phi^\ast(|\nabla f|)+\phi(|\eps u|)\,dx.
	\end{equation}
	The same bound holds for all interior balls $B$ with $2B\subset \Omega$.
	
	We now cover $\Omega$ by choosing a finite collection of balls $(B_i)_{i\in I}$ resp. cylinders at the boundary $(B_j^+)_{j\in J}$ such that
	\begin{enumerate}
		\item The $B_i$ together with the $B_j^+$ cover $\Omega$,
		\item $2B_i\subset \Omega$ resp. $2B_j^+\subset \Omega$ holds for all $i\in I$ resp. $j\in J$,
		\item There exists a natural number $C=C(n)$ such that every $x\in \Omega$ is contained in at most $C$ of the $B_i$ or $B_j^+$.
	\end{enumerate}
	Additionally we assume that \eqref{eq:bound_nabla_a} holds for all of the $B_j^+$. We denote by $\eta_i$ resp. $\eta_j$ the cut-off function corresponding to $B_i$ resp. $B_j^+$. We have
	\begin{align*}
		\int_\Omega |\nabla V(\eps u)|^2\,dx&\leq \sum_i \int_{B_i}|\nabla V(\eps u) |^2 \,dx + \sum_j \int_{B_j^+}|\nabla V(\eps u)|^2 \,dx \\
		&\leq \sum_i \int_{2B_i}r_i^2\eta_i^2|\nabla V(\eps u) |^2 \,dx + \sum_j \int_{2B_j^+}r_j^2\eta_j^2|\nabla V(\eps u)|^2 \,dx .
	\end{align*}
	Using \eqref{eq:final_bound_bdry} and the finite overlap of our covering we get
	\begin{equation*}
		\int_\Omega |\nabla V(\eps u)|^2\,dx\leq c \int_{\Omega}\phi^\ast(|f|)+ \phi^\ast(|\nabla f|)+\phi(|\eps u|)\,dx.
	\end{equation*}
	Together with the energy estimate \eqref{eq:energy_estimate} this implies the claimed bound.
\end{proof}

\subsection{Passage to the limit}
\label{sec:passage-limit}

Proposition~\ref{pro:nondegen} already contains the results of Theorem \ref{thm:orlicz} but is restricted to N-functions with quadratic growth. In this section we will show how to approximate every uniformly convex N-function by those with quadratic growth and how to pass to the limit.

For this let $\phi$ be a uniformly convex N-function. For every $0 \leq \delta^- \leq \delta^+ \leq \infty$ and $\delta\coloneqq (\delta^-,\delta^+)$ we define the truncated N-functions $\phi_\delta=\phi_{(\delta^-,\delta^+)}$ via
\begin{equation*}
  \phi_\delta^\prime(t)=\phi_{(\delta^-,\delta^+)}'(t)\coloneqq  \dfrac{\phi^\prime(\delta^-\lor t\land \delta^+)}{\delta^-\lor t \land \delta^+}t
\end{equation*}
and $\phi_\delta (t)\coloneqq \int_0^t \phi^\prime _\delta(s)\,ds$.  These functions were introduced in~\cite{DieForTom20}. See the Appendix for more details. For example 
by Lemma~\ref{lem:index_of_phi_delta} we know that $\phi_\delta$ is a uniformly convex N-function with quadratic growth.

Accordingly, we define for $Q\in \RRnn$
\begin{equation}
	A_\delta (Q)\coloneqq \phi_\delta^\prime(|Q|)\dfrac{Q}{|Q|} \qquad \text{and} \qquad V_\delta(Q)\coloneqq  \sqrt{\phi^\prime_\delta(|Q|)|Q|}\dfrac{Q}{|Q|}.
\end{equation}
We can apply Proposition~\ref{pro:nondegen} only to systems where $f$ is contained in $W^{1,2}(\Omega)$. Thus we also need to approximate $f$. For this we perform a \emph{Lipschitz truncation}, i.e. we approximate $f$ by functions $f_\delta \in W^{1,\infty}(\Omega)$ which agree with $f$ outside of a small set.

As an auxiliary tool we need the Hardy-Littlewood maximal operator
\begin{equation*}
	(Mf)(x):= \sup_{r>0}\fint_{B_r(x)} |f|\,dy.
\end{equation*}
To approximate~$f$ we will use the following approximation result.

\begin{lemma}[Lipschitz truncation]
  \label{lem:liptruncation}
  Let $\Omega \subset \RRn$ be a Lipschitz domain. Let $\phi$ be a uniformly convex N-function. Let $v \in W^{1,\phi}_0(\Omega)$. Then for all $\lambda >0$ there exists $T_\lambda v \in W^{1,\infty}_0(\Omega)$ with the following properties
  \begin{enumerate}
  \item \label{itm:liptruncation1} $\set{v \neq T_\lambda v} \subset \set{M(\nabla v)>\lambda}$.
  \item $\norm{T_\lambda v}_{L^\phi(\Omega)} \lesssim \norm{v}_{L^\phi(\Omega)}$ and $\int_\Omega \phi(\abs{T_\lambda v})\,dx \lesssim \int_\Omega \phi(\abs{v})\,dx$.    
  \item $\norm{\nabla T_\lambda v}_{L^\phi(\Omega)} \lesssim \norm{\nabla v}_{L^\phi(\Omega)}$ and $\int_\Omega \phi(\abs{\nabla T_\lambda v})\,dx \lesssim \int_\Omega \phi(\abs{\nabla v})\,dx$.    
  \item $\abs{\nabla T_\lambda v} \lesssim \lambda \indicator_{\set{M(\nabla v)>\lambda}} + \abs{\nabla v}_{\Omega \setminus \set{M(\nabla v)> \lambda}} \leq \lambda$ almost everywhere.
  \item $\norm{\nabla (v-T_\lambda v)}_{L^\phi(\Omega)} \lesssim \norm{\nabla v}_{L^\phi(\Omega)}$.
  \item 
    $\int_\Omega \phi(\abs{\nabla (v-T_\lambda v)})\,dx \lesssim \int_\Omega \indicator_{\set{M(\nabla v)>\lambda}}
    \phi(\abs{\nabla v})\,dx$.
  \item $T_\lambda v \to v$ in $W^{1,\phi}_0(\Omega)$ as $\lambda \to \infty$.
  \end{enumerate}
  The hidden constants only depend on the indices $p^-$ and $p^+$ of $\phi$ and $\Omega$.
\end{lemma}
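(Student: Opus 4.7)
The plan is to follow the classical Acerbi--Fusco Lipschitz truncation scheme, adapted to the Orlicz setting. This adaptation works because uniform convexity of $\phi$ implies $\phi,\phi^\ast \in \Delta_2$, so the Hardy--Littlewood maximal operator is bounded on $L^\phi(\RRn)$ both in norm form and in modular form $\int \phi(M g)\,dx \lesssim \int \phi(\abs{g})\,dx$.

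First I would reduce the problem to $\RRn$: since $v \in W^{1,\phi}_0(\Omega)$, the zero extension $\bar v$ lies in $W^{1,\phi}(\RRn)$ with $\nabla \bar v = \nabla v\cdot \indicator_\Omega$. Define the bad set $E_\lambda \coloneqq \set{M\abs{\nabla \bar v}>\lambda}$ and good set $G_\lambda \coloneqq \RRn \setminus E_\lambda$. The standard pointwise Sobolev--Poincaré inequality yields
\begin{equation*}
  \abs{\bar v(x)-\bar v(y)} \leq c\abs{x-y}\big(M\abs{\nabla \bar v}(x)+M\abs{\nabla \bar v}(y)\big) \quad \text{for a.e. } x,y,
\end{equation*}
so that $\bar v|_{G_\lambda}$ is $2c\lambda$-Lipschitz. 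I would then define $T_\lambda v$ as a symmetric McShane extension of $\bar v|_{G_\lambda}$ to a $2c\lambda$-Lipschitz function on $\RRn$. Since $\bar v \equiv 0$ on $\RRn \setminus \Omega$ and far from $\partial \Omega$ we have $M\abs{\nabla \bar v}$ small so those points lie in $G_\lambda$ with value zero, the Lipschitz envelope forces $T_\lambda v$ to decay to zero at infinity and, by continuity together with the Lipschitz bound, to vanish on $\partial \Omega$. Thus $T_\lambda v \in W^{1,\infty}_0(\Omega)$ after restriction.

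To verify the listed properties, (1) is immediate since $T_\lambda v = \bar v$ on $G_\lambda$. For (4), on $G_\lambda$ one has $\nabla T_\lambda v = \nabla v$ almost everywhere by Lebesgue differentiation, with $\abs{\nabla v(x)} \leq M\abs{\nabla v}(x) \leq \lambda$; on $E_\lambda$ the bound $\abs{\nabla T_\lambda v} \leq 2c\lambda$ is simply the Lipschitz constant. Properties (2), (3), (5) then follow by combining the pointwise bounds $\abs{T_\lambda v}\lesssim M\abs{v}$ and $\abs{\nabla T_\lambda v} \lesssim M\abs{\nabla v}$ with the modular maximal inequality. Property (6) uses that $\nabla(v-T_\lambda v)$ is supported in $E_\lambda \cup \set{v\neq T_\lambda v} \subset E_\lambda$, together with $\abs{\nabla T_\lambda v} \lesssim M\abs{\nabla v}$ on $E_\lambda$ and the strong maximal estimate restricted to that set. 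Finally (7) follows since $\abs{E_\lambda}\to 0$ as $\lambda\to\infty$ by the weak-type bound, so property (6) and dominated convergence in $L^\phi$ (again using $\phi \in \Delta_2$) give $\int_\Omega \phi(\abs{\nabla (v-T_\lambda v)})\,dx \to 0$, and analogously for $v - T_\lambda v$ itself.

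The main obstacle is to make the Lipschitz extension genuinely preserve zero boundary values, since a priori $E_\lambda$ may contain a neighborhood of $\partial \Omega$ for small $\lambda$. The zero-extension trick handles this cleanly: by working on all of $\RRn$ rather than on $\Omega$, the McShane construction interpolates between the zero values on $G_\lambda \cap (\RRn\setminus \Omega)$ (nonempty and touching $\partial\Omega$ in the Lipschitz-domain sense) and the values of $\bar v$ inside $\Omega \cap G_\lambda$, yielding $T_\lambda v \in W^{1,\infty}_0(\Omega)$. The implicit constants depend only on $\Delta_2(\phi)$, $\Delta_2(\phi^\ast)$ and on the Lipschitz character of $\Omega$, hence ultimately only on $p^-$, $p^+$ and $\Omega$ as claimed.
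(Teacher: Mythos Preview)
Your approach is essentially the same as the paper's: the paper's proof consists only of noting that $1<p^-\leq p^+<\infty$ and then deferring to \cite{DieKreuSueli13,BulDieSch16} for the construction, with the remark that the $L^p$-boundedness of the maximal operator must be replaced by its Orlicz analogue from \cite{KraRut61}. Your sketch fills in exactly this outline.

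One technical remark: the cited references build $T_\lambda v$ via a Whitney decomposition of the bad set $E_\lambda$ and a partition-of-unity averaging, rather than the McShane extension you propose. The Whitney route makes two of your claims cleaner. First, the pointwise bound $\abs{T_\lambda v}\lesssim M\abs{v}$ that you invoke for (2) follows directly because on each Whitney cube $T_\lambda v$ is a local average of $\bar v$; for a raw McShane envelope this estimate is not immediate. Second, and more delicately, the conclusion $T_\lambda v\in W^{1,\infty}_0(\Omega)$: your argument that the extension ``interpolates towards zero'' near $\partial\Omega$ does not quite close, since $E_\lambda$ may contain a full neighbourhood of a boundary point and the McShane value there need not vanish. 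In the Whitney construction one assigns the value zero on every cube meeting $\RRn\setminus\Omega$ and verifies, using the fat-complement property of the Lipschitz domain, that the gradient bound $\abs{\nabla T_\lambda v}\lesssim\lambda$ survives on those cubes. These are not defects of strategy---your ingredients and the Orlicz maximal inequality are exactly the right ones---but the McShane shortcut leaves those two points in need of separate justification.
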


\begin{proof}
  The uniform convexity of~$\phi$ ensures that $1<p^- \leq p^+ < \infty$. Now, the proof follows exactly as in~\cite{DieKreuSueli13,BulDieSch16}. However, the boundedness of the maximal operator~$M$ in $L^p$ has to be replaced by the corresponding results on Orlicz spaces, see~\cite{KraRut61}. 
\end{proof}

We are now ready to prove Theorem \ref{thm:orlicz}.
\begin{proof}[Proof of Theorem \ref{thm:orlicz}]
  In order to apply Proposition~\ref{pro:nondegen} we approximate $f$ by applying the above Lipschitz truncation $f_\delta \coloneqq T_\lambda f \in W^{1,\infty}_0(\Omega)$ on the level $\lambda=c\,\phi^\prime(\delta^+)$, so that $\abs{\nabla f_\delta} \leq \phi'(\delta^+)$.  The level~$\lambda = c\phi^\prime(\delta^+)$ is chosen in such a way that the truncation of $\phi$ at $\delta^{+}$ (near $\infty$) does not see the difference between $\nabla f_\delta$ and $\nabla f$.
	
  By Proposition~\ref{pro:nondegen} we know that for every $0<\delta^- \leq \delta^+<\infty$ there exists a unique solution $u_\delta \in W_0^{1,\phi_\delta}(\Omega)$ such that
	\begin{equation*}
		\begin{aligned}
			-\divergence (A_\delta (\epsilon u_\delta))&= f_\delta\qquad \text{in }\Omega \\
			u_\delta &= 0 \qquad \text{on }\partial \Omega 
		\end{aligned}
	\end{equation*}
  and (also using the Orlicz-\Poincare{} inequality Propositon~\ref{prop:Poincare})
	\begin{equation}\label{eq:approx_uniform_bound}
		\|V_\delta(\eps u_\delta)\|^2_{W^{1,2}(\Omega)}\leq  c\, \int_\Omega
    \phi^\ast_\delta (|\nabla f_\delta|)\,dx \coloneqq c K_\delta,
  \end{equation}
  where $c$ is independent of $\delta$. In a moment, it will turn out that $K_\delta$ can be bounded independently of $\delta$. Using $\abs{\nabla f_\delta} \leq \phi'(\delta^+)$, Lemma \ref{lem:index_of_phi_delta} \ref{itm:index_of_phi_delta5} and Lemma~\ref{lem:liptruncation} we have
  \begin{align*}
    K_\delta
    &= \int_\Omega (\phi^\ast)_{\phi^\prime(\delta),\phi^\prime(\delta^{+})}(|\nabla f_\delta|)\,dx
    \\
    &= \int_\Omega (\phi^\ast)_{\phi^\prime(\delta),\infty}(|\nabla f_\delta|)\,dx
    \\
    &\lesssim \phi(\delta^-) + \int_\Omega \phi^\ast(|\nabla f_\delta|)\,dx
    \\
    &\lesssim \phi(\delta^-) + \int_\Omega \phi^\ast(|\nabla f|)\,dx.
  \end{align*}
	The first term vanishes as $\delta^-\rightarrow 0$. Thus, combining this with \eqref{eq:approx_uniform_bound}, we have  uniformly in $\delta$
	\begin{equation}\label{eq:approx_bound}
    	\|V_\delta(\eps u_\delta)\|^2_{W^{1,2}(\Omega)}\lesssim \phi(\delta^-)+ \int_{\Omega} \phi^\ast (|\nabla f|)\,dx.
  \end{equation}
  In the following we assume $\delta^-\leq 1$ and $\delta^+ \geq 1$ and are interesting in the limit $\delta^- \to 0$ and $\delta^+\to \infty$. Since $\norm{V_\delta(\epsilon u_\delta)}_{L^2(\Omega)}$ is uniformly bounded, so is~$\norm{\epsilon u_\delta}_{L^{\phi_\delta}(\Omega)}$.  Setting $s\coloneqq p^- \land 2$, Lemma \ref{lem:uniform_embedding} implies that the $u_\delta$ are uniformly bounded in $W^{1,s}(\Omega)$.
	Thus, after passing to a (non relabeled) subsequence we have for some $u^\ast\in W^{1,s}(\Omega)$
	\begin{equation}\label{eq:convergence1}
		\begin{aligned}
			u_\delta &\rightharpoonup u^\ast \qquad\text{weakly in $W^{1,s}(\Omega)$,}\\
			u_\delta &\rightarrow u^\ast \qquad\text{strongly in $L^s$.}
		\end{aligned}
	\end{equation}
	In the following we will show that $u^\ast$ fulfills the regularity estimate \eqref{eq:bound_in_main_thm} and then that it is indeed the weak solution to the system \eqref{eq:system_in_main_thm}.
	
	From \eqref{eq:approx_bound} we can assume that there exists $Q\in W^{1,2}(\Omega)$ such that
	\begin{equation}\label{eq:convergence2}
		\begin{aligned}
		V_\delta(\eps u _\delta) &\rightharpoonup Q \qquad \text{weakly in $W^{1,2}(\Omega)$,}\\
		V_\delta(\eps u _\delta) &\rightarrow Q \qquad \text{strongly in $L^2(\Omega)$,}\\
		V_\delta(\eps u _\delta) &\rightarrow Q \qquad \text{pointwise almost everywhere in $\Omega$.}
	\end{aligned}
      \end{equation}
	Using Lemma \ref{lem:continuities}, the pointwise convergence in \eqref{eq:convergence2} implies that $\eps u_\delta =V_\delta^{-1}\circ V_\delta (\eps u_\delta)$ converges pointwise almost everywhere to $V^{-1}(Q)$. Since we also know from \eqref{eq:convergence1} that $\eps u_\delta \rightharpoonup \eps u^\ast$ weakly in $L^s$, we have $V(\eps u^\ast)=Q$. We can now use the first convergence in \eqref{eq:convergence2} together with the uniform bound \eqref{eq:approx_bound} to conclude
	\begin{equation}\label{eq:conclusion}
		\|V(\eps u^\ast)\|^2_{W^{1,2}(\Omega)} \leq c(n,\Omega,p^-,p^+) \int_\Omega \phi^\ast (|\nabla f|)\,dx.
	\end{equation}
	We are left to show that $u^\ast$ is the weak solution of the system \eqref{eq:system_in_main_thm}. Using Lemma \ref{lem:continuities} we know that $A_\delta (\eps u_\delta) \rightarrow A(\eps u^\ast)$ almost everywhere. Note that
	\begin{equation*}
		\int_\Omega (\phi _\delta)^\ast (|A_\delta (\eps u_\delta)|) \,dx \eqsim \int_\Omega \phi_\delta (\abs{\eps u_\delta}) \,dx \leq c(\Omega ,f).
	\end{equation*}
	Thus, by Lemma~\ref{lem:uniform_embedding}, $\epsilon u_\delta$ is uniformly bounded in $L^{p^- \wedge 2}(\Omega)$ and $A_\delta (\eps u_\delta)$ uniformly bounded in $L^{(p^+)' \wedge 2}(\Omega)$. This together with the almost everywhere convergence imply convergence in $L^1$ (for a subsequence). Thus, $A_\delta (\eps u _\delta) \rightarrow A(\eps u^\ast)$ in $L^1(\Omega)$. This implies for all $\psi \in C_c^\infty (\Omega)$
	\begin{align*}
		\skp{A(\eps u^\ast)}{\eps \psi} &= \lim_{\delta \rightarrow 0} \skp{A_\delta (\eps u_\delta)}{\eps \psi}\\
		&=\lim_{\delta \rightarrow 0} \skp{f_\delta}{\psi}\\
		&=\skp{f}{\psi}.
	\end{align*}
	This shows that $u^\ast$ indeed solves \eqref{eq:system_in_main_thm} in the weak sense. Therefore, \eqref{eq:conclusion} gives us the claimed bound \eqref{eq:bound_in_main_thm}, which finishes the proof.
\end{proof}

Finally, we prove Theorem~\ref{thm:p-growth}.

\begin{proof}[Proof of Theorem~\ref{thm:p-growth}]
	We restrict our proof to the case $\delta =0$, the case $\delta >0$ can be treated in the same way. The claimed estimate follows immediately from Theorem \ref{thm:orlicz}. We are thus left to prove the claimed Sobolev regularity of $u$. In the case $p\geq 2$ we note that $V(\eps u) =  |\eps u|^{\tfrac{p-2}{2}}\eps u$ implies
  \begin{equation*}
    \eps u=|V(\eps u)|^{\tfrac{2}{p}}\dfrac{V(\eps u)}{|V(\eps u)|}.,	
  \end{equation*}
  Using \cite[Proposition 4.4]{BalDieWei20}\footnote{The result in~\cite{BalDieWei20} is proven for balls but extends easily to Lipschitz domains using the result of~\cite{Shvartsman2006}.}%
    we get that $\eps u \in W^{2/p,p}$ and thus by Korn's inequality $u \in W^{1+2/p,p}$.
  
  For $p<2$ we first note that by Sobolevs embedding we have
  \begin{equation*}
    V(\eps u)\in W^{1,2}(\Omega) \implies V(\eps u)\in L^{\tfrac{2n}{n-2}}(\Omega) \implies \eps u \in L^{\tfrac{np}{n-2}}.
  \end{equation*}
  Setting $r\coloneqq \tfrac{np}{n+p-2}$ we get
  \begin{align*}
    \int_\Omega |\nabla \eps u|^r\,dx&=\int_\Omega \Big(|\nabla \eps u|^2|\eps u|^{p-2} \Big)^{r/2}|\eps u|^{\tfrac{(2-p)r}{2}} \,dx\\ 
    &\lesssim \Big(\int_\Omega  |\nabla V(\eps u)|^2 \,dx\Big)^{r/2}\Big(  \int_\Omega|\eps u|^{\tfrac{np}{n-2}}\,dx\Big)^{\tfrac{2-r}{2}}.
  \end{align*}
  We thus have $\nabla \eps u \in L^r(\Omega)$ which implies $u \in W^{2,r}(\Omega)$.
\end{proof}

\appendix
\section{Uniformly convex N-functions}
\label{sec:unif-conv-orlicz-appendix}

In this Appendix we collect some properties of uniformly convex N-functions (also called uniformly convex Young functions), see Definition~\ref{def:uniform_convexity}. We will carefully ensure that all constants (implicit and explicit) in this section depend on the Orlicz function $\phi$ only through the indices $p^-$ and $p^+$.

\subsection{Basic properties}

Let us collect a few basic properties of uniformly convex N-functions.
In the following let $p'=\frac{p}{p-1}$ denote the conjugate exponent of~$p$.
\begin{lemma}\label{lem:uniformly_convex_dual}
	If $\phi$ is a uniformly convex N-function with indices $p^-$ and $p^+$, then $\phi^\ast$ is also uniformly convex with indices $(p^+)^\prime$ and $(p^-)^\prime$.
\end{lemma}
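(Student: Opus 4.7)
The plan is to exploit the standard duality formula $(\phi^\ast)'(s) = (\phi')^{-1}(s)$, which is available because uniform convexity of $\phi$ forces $\phi'' > 0$ on $(0,\infty)$ (so $\phi'$ is strictly increasing and its inverse is a well-defined $C^1$-function on $(0,\infty)$). First I would verify that $\phi^\ast \in C^1([0,\infty))\cap C^2((0,\infty))$ by invoking the inverse function theorem, yielding the explicit identity
\begin{equation*}
  (\phi^\ast)''(s) = \frac{1}{\phi''(t)}, \qquad \text{where } t = (\phi')^{-1}(s),\ s = \phi'(t).
\end{equation*}
This reduces the whole lemma to a one-variable algebraic manipulation, so I do not expect any genuine analytic difficulty.

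Next I would substitute this expression into the ratio appearing in Definition~\ref{def:uniform_convexity}. Setting $r(t) \coloneqq 1 + \tfrac{t\phi''(t)}{\phi'(t)}$, a direct calculation with $s = \phi'(t)$ gives
\begin{equation*}
  1 + \frac{s\,(\phi^\ast)''(s)}{(\phi^\ast)'(s)}
  = 1 + \frac{\phi'(t)}{t\,\phi''(t)}
  = 1 + \frac{1}{r(t)-1}
  = \frac{r(t)}{r(t)-1}.
\end{equation*}
By assumption $r(t) \in [p^-, p^+] \subset (1,\infty)$ for all $t>0$.

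Since the map $q \mapsto q/(q-1) = q'$ is strictly decreasing on $(1,\infty)$, the range of $r(t)/(r(t)-1)$ is contained in $[(p^+)', (p^-)']$, and both endpoints are attained in the supremum/infimum sense as $r(t)$ sweeps through its own infimum/supremum. Therefore the lower index of $\phi^\ast$ equals $(p^+)'$ and the upper index equals $(p^-)'$, which is exactly the claim. The only step that requires a little care is the bijection $t \leftrightarrow s$ between $(0,\infty)$ and $(0,\infty)$, but $p^- > 1$ and $p^+ < \infty$ guarantee that $\phi'$ is a homeomorphism $[0,\infty)\to[0,\infty)$, so the infimum and supremum are preserved under the change of variables.
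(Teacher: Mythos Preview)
Your argument is correct and is precisely the standard computation one expects here: the key identity $1 + s(\phi^\ast)''(s)/(\phi^\ast)'(s) = r(t)/(r(t)-1)$ with $s=\phi'(t)$ immediately transfers the bounds $p^-\le r(t)\le p^+$ to $(p^+)'$ and $(p^-)'$ via the decreasing bijection $q\mapsto q'$. The paper does not supply its own proof but simply cites \cite[Lemma~29]{DieForTom20} and \cite[Lemma~6.4]{DieRuz07}; your write-up is essentially the self-contained version of that cited argument.
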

\begin{proof}
	This is shown in \cite[Lemma 29]{DieForTom20} and \cite[Lemma 6.4]{DieRuz07}.
\end{proof}

The following lemma shows that the indices of uniform convexity provide an upper and lower bound for the Simonenko indices. See~\cite{FioKRb97} for an overview on indices on Orlicz spaces.
\begin{lemma}[Simonenko indices]
  \label{lem:simonenko}
  Let $\phi$ be uniformly convex with indices~$p^-$ and $p^+$. Then
  \begin{align*}
    p^- \leq \inf_{t>0} \frac{\phi'(t) t}{\phi(t)} \leq \sup_{t>0} \frac{\phi'(t) t}{\phi(t)}\leq p^+.
  \end{align*}
  In particular, 
  \begin{align*}
			\min \set{t^{p^+},t^{p^-}}\phi(s)&\leq \phi(ts)\leq \max  \set{t^{p^+},t^{p^-}} \phi(s) \quad \text{for all $s,t \geq 0$.}
  \end{align*}
\end{lemma}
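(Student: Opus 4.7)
The plan is to extract information from the quantity $J(t) := \phi''(t)t/\phi'(t) \in [p^- - 1, p^+ - 1]$, which is the content of the uniform convexity, and propagate it to the ratio $I(t) := \phi'(t) t / \phi(t)$ via an auxiliary function argument. Specifically, I will introduce
\begin{align*}
  F(t) := \phi'(t) t - p^- \phi(t),
  \qquad
  G(t) := p^+ \phi(t) - \phi'(t) t,
\end{align*}
and observe that $F(0) = G(0) = 0$ (since $\phi(0) = 0$ and $\phi'(0) = 0$). A direct differentiation gives
\begin{align*}
  F'(t) = \phi''(t) t - (p^- - 1) \phi'(t) \geq 0,
  \qquad
  G'(t) = (p^+ - 1) \phi'(t) - \phi''(t) t \geq 0,
\end{align*}
where the inequalities come exactly from the definition of $p^-$ and $p^+$ (after multiplying by $\phi'(t) > 0$ for $t > 0$). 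Hence $F,G \geq 0$, which rearranges to $p^- \leq I(t) \leq p^+$ for all $t > 0$.

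For the second assertion, I would fix $s > 0$ and study the function $\psi(\tau) := \log \phi(\tau s)$ for $\tau > 0$. Since $\phi > 0$ on $(0, \infty)$, $\psi$ is smooth and
\begin{align*}
  \psi'(\tau) = \frac{s \phi'(\tau s)}{\phi(\tau s)} = \frac{I(\tau s)}{\tau} \in \Bigl[\frac{p^-}{\tau}, \frac{p^+}{\tau}\Bigr]
\end{align*}
by the first part. For $t \geq 1$, integration of $\psi'$ over $[1,t]$ yields $p^- \log t \leq \log \phi(ts) - \log \phi(s) \leq p^+ \log t$, hence $t^{p^-}\phi(s) \leq \phi(ts) \leq t^{p^+}\phi(s)$. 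For $t \in (0,1]$, integration over $[t,1]$ gives $p^- \log(1/t) \leq \log \phi(s) - \log \phi(ts) \leq p^+ \log(1/t)$, and hence $t^{p^+}\phi(s) \leq \phi(ts) \leq t^{p^-}\phi(s)$. Combining the two ranges produces exactly the stated $\min/\max$ inequality.

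There is no real obstacle beyond being careful about the case distinction $t \geq 1$ vs.\ $t \leq 1$ (since $t \mapsto t^{p^\pm}$ swaps order across $t = 1$) and about the boundary behavior at $t = 0$, which is handled by $F(0) = G(0) = 0$ together with the continuity of $\phi, \phi'$ guaranteed by $\phi \in C^1([0,\infty))$. Everything else is an immediate consequence of the fundamental theorem of calculus applied to the auxiliary functions above.
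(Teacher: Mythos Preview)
Your proof is correct and follows essentially the same approach as the paper. For the first part, your auxiliary functions $F$ and $G$ encode exactly the integration carried out in the paper (the paper writes $\phi'(t)t=\int_0^t(\phi''(s)s+\phi'(s))\,ds\leq\int_0^t p^+\phi'(s)\,ds$, which is the statement $G'\geq 0$ integrated from $0$); for the second part, both you and the paper integrate the logarithmic derivative $\tau\mapsto s\phi'(\tau s)/\phi(\tau s)$ and use the bounds from the first part.
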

\begin{proof}
	The upper bound with $p^+$ follows from 
	\begin{equation}\label{eq:appendixaux}
		\phi^\prime(t)t=\int_0^t \phi^{\prime\prime}(s)s+\phi^\prime(s)\,ds\leq \int_0^t p^+\phi^\prime(s) \,ds=p^+\phi(t).
	\end{equation}
	The lower bound with $p^-$ follows for analogously.

  The rest of the claim follows by the theory on Simonenko indices, but let us include a short proof. If $t\geq 1$, then
	\begin{align*}
		\ln(\phi(st))-\ln(\phi(s))&=\int_{1}^{t}\dfrac{s\phi^{\prime}(\lambda s)}{\phi(\lambda s)}\,d\lambda \leq \int_{1}^{t}\lambda^{-1}p^+\,d\lambda=p^+ \ln(t).
	\end{align*}
  We thus have $\phi(st)\leq t^{p^+}\phi(s)$ for $t \geq 1$. The other bounds follow similarly.
\end{proof}

\begin{corollary}[$\Delta_2$ and Young's inequality]
  Let $\phi$ be a uniformly convex N-function with indices $p^-$ and $p^+$. Then $\phi$ and $\phi^*$ satisfy the $\Delta_2$-condition, i.e. for all $t \geq 0$
  \begin{align}
    \label{eq:delta2}
    \begin{aligned}
      \phi(2t) &\leq 2^{p^+} \phi(t),
      \\
      \phi^*(2t) &\leq 2^{(p^-)'} \phi^*(t).
    \end{aligned}
  \end{align}
  Moreover, for all $s,t\geq 0$ and $\delta \in (0,1]$
  \begin{align}
    \label{eq:young}
    \begin{aligned}
      st&\leq \delta^{1-{p^+}} \phi(s)+\delta \phi^\ast (t),
      \\
      st&\leq \delta \phi(s)+\delta^{1-(p^-)'} \phi^\ast (t).
    \end{aligned}
  \end{align}
\end{corollary}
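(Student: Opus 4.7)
The plan is to deduce both parts from the Simonenko-index bounds in Lemma~\ref{lem:simonenko} together with the duality in Lemma~\ref{lem:uniformly_convex_dual}, so essentially nothing new is needed beyond scaling.

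For the $\Delta_2$ bounds~\eqref{eq:delta2}, I would apply Lemma~\ref{lem:simonenko} directly: setting $s=t$ and taking the factor~$2$ inside, the upper bound with $p^+$ gives
\begin{equation*}
  \phi(2t)\leq \max\{2^{p^+},2^{p^-}\}\phi(t)=2^{p^+}\phi(t),
\end{equation*}
since $p^-\leq p^+$. For the dual estimate, Lemma~\ref{lem:uniformly_convex_dual} tells me that $\phi^*$ is uniformly convex with indices $(p^+)'$ and $(p^-)'$, so applying Lemma~\ref{lem:simonenko} to $\phi^*$ with upper index $(p^-)'$ yields $\phi^*(2t)\leq 2^{(p^-)'}\phi^*(t)$.

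For the weighted Young inequalities~\eqref{eq:young}, my approach is the standard rescaling trick. For arbitrary $\alpha>0$, Young's inequality applied to the factorisation $st=(\alpha s)(\alpha^{-1}t)$ gives
\begin{equation*}
  st\leq \phi(\alpha s)+\phi^*(\alpha^{-1}t).
\end{equation*}
I then control each term via Lemma~\ref{lem:simonenko} (applied to $\phi$ and to $\phi^*$, the latter using Lemma~\ref{lem:uniformly_convex_dual}). For the first inequality I pick $\alpha\geq 1$, in which case $\phi(\alpha s)\leq \alpha^{p^+}\phi(s)$ and, since $\alpha^{-1}\leq 1$ and $(p^+)'\leq (p^-)'$, one checks that $\phi^*(\alpha^{-1}t)\leq \alpha^{-(p^+)'}\phi^*(t)$. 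Choosing $\alpha=\delta^{-1/(p^+)'}\geq 1$ (valid because $\delta\leq 1$) makes the $\phi^*$-coefficient equal to $\delta$, while the $\phi$-coefficient becomes $\alpha^{p^+}=\delta^{-p^+/(p^+)'}=\delta^{1-p^+}$, since $p^+/(p^+)'=p^+-1$. This is exactly the first bound in~\eqref{eq:young}. The second bound is symmetric: take $\alpha\leq 1$, use $\phi(\alpha s)\leq \alpha^{p^-}\phi(s)$ and $\phi^*(\alpha^{-1}t)\leq \alpha^{-(p^-)'}\phi^*(t)$, and choose $\alpha=\delta^{1/p^-}\leq 1$, so that $\alpha^{p^-}=\delta$ and $\alpha^{-(p^-)'}=\delta^{-(p^-)'/p^-}=\delta^{1-(p^-)'}$ (using $(p^-)'/p^-=1/(p^--1)=(p^-)'-1$).

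There is no real obstacle here; the only thing to keep straight is the monotonicity of $\beta\mapsto \beta^{r}$ depending on whether $\beta\lessgtr 1$, so that the correct branch of $\max\{\beta^{p^-},\beta^{p^+}\}$ (respectively for $\phi^*$) is chosen in each of the two cases. Once the scaling parameter $\alpha$ is selected as above, both weighted Young inequalities come out with matching constants.
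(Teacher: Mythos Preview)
Your argument is correct and follows essentially the same route as the paper: derive the $\Delta_2$ bounds from Lemma~\ref{lem:simonenko} (together with Lemma~\ref{lem:uniformly_convex_dual} for $\phi^*$), and obtain the weighted Young inequalities by combining the basic Young inequality with a rescaling and the Simonenko bounds. The paper's version is marginally slicker in that it writes $st=\delta(s/\delta)t\leq \delta\phi(s/\delta)+\delta\phi^*(t)$ and then applies Lemma~\ref{lem:simonenko} to a single term, whereas you scale both factors and invoke the Simonenko bound twice; but this is a cosmetic difference, not a substantive one.
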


\begin{proof}
  The first part follows from Lemma~\ref{lem:simonenko}. The second part follows by Young's inequality and  Lemma~\ref{lem:simonenko}, i.e.
  \begin{align*}
    s\,t = \delta (s/\delta) t \leq \delta \big( \phi(s/\delta) + \phi^*(t)\big) \leq \delta^{1-p^+}\phi(s) + \delta \phi^*(t).
  \end{align*}
  The other estimate follows by replacing~$\phi$ with $\phi^*$.
\end{proof}

For N-functions~$\phi$ one has for $t\geq 0$
\begin{align*}
  \phi(t) \leq \phi'(t) t \leq \phi(2t).
\end{align*}
As a consequence one obtains for $t \geq 0$
\begin{align*}
	 \phi^*\big(\phi'(t)\big)\leq t \phi'(t)\leq \phi^\ast \big(2\phi' (t)\big).
\end{align*}
Hence, if $\phi$ is uniformly convex with indices~$p^-$ and $p^+$, then for $t \geq 0$
\begin{align}
  \label{eq:orlicz_basic2}
  2^{-(p^-)'}\phi(t) \leq \phi^*\big(\phi'(t)\big)  \leq 2^{p^+} \phi(t).  
\end{align}
Recall that by~\eqref{eq:defAV}
\begin{align*}
	A(P)= \phi^\prime(|P|)\dfrac{P}{|P|}\qquad\text{and}\qquad V(P)= \sqrt{\phi^\prime(|P|)|P|}\dfrac{P}{|P|}.
\end{align*}

\begin{remark}
  \label{rem:AV}
  As $A$ is induced by~$\phi$, $V$ is induced by~$\psi$, where $\psi$ is the N-function defined by
  \begin{align*}
    \psi'(t) \coloneqq \sqrt{\phi'(t)\,t}.
  \end{align*}
  One easily calculates
  \begin{align*}
    \frac{\psi''(t)t}{\psi'(t)} &=\frac 12 \frac{\phi''(t)t}{\phi'(t)} + \frac 12. 
  \end{align*}
  This proves that $\psi$ is a uniformly convex N-function and the indices of $\psi$ and $\phi$ are related by the formula %
  \begin{align*}
    p^\pm_\psi &= 1 + \frac{p^\pm_\phi}{2}.
  \end{align*}
\end{remark}

The quantities $A$, $V$ and $\phi$ are linked by the following important equivalence.

\begin{lemma}\label{lem:Hammer}
	Let $\phi$ be a uniformly convex N-function. Then for all $P,Q \in \RRnn$
	\begin{align*}
		(A(P)-A(Q)):(P-Q)&\eqsim |V(P)-V(Q)|^2 \eqsim \phi''(|P|+|Q|)|P-Q|^2,
	\end{align*}
	where the implicit constants depend only on the indices $p^-$ and $p^+$.
\end{lemma}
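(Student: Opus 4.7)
\medskip
\noindent\textbf{Plan.} My strategy is to show that all three quantities are equivalent, with constants depending only on $p^-$ and $p^+$, to
\begin{align*}
    I(P,Q) \;\coloneqq\; |P-Q|^2 \int_0^1 \phi''(|P_t|)\, dt, \qquad P_t \coloneqq Q + t(P-Q).
\end{align*}
Comparing each of $(A(P)-A(Q)){:}(P-Q)$, $|V(P)-V(Q)|^2$, and $\phi''(|P|+|Q|)|P-Q|^2$ to $I(P,Q)$ then yields the lemma.

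\medskip
\noindent\textbf{Step 1: Pointwise derivative estimates.} First I would establish that for $P \neq 0$ and every $H \in \mathbb{R}^{n\times n}$,
\begin{align*}
    DA(P)[H]{:}H \;\eqsim\; \phi''(|P|)\,|H|^2, \qquad |DV(P)[H]|^2 \;\eqsim\; \phi''(|P|)\,|H|^2,
\end{align*}
with constants depending only on $p^\pm$. Direct differentiation of $A(P) = \phi'(|P|)P/|P|$ shows that the symmetric part of $DA(P)$ has two eigenvalues: $\phi''(|P|)$ in the radial direction and $\phi'(|P|)/|P|$ in the tangential ones. Definition~\ref{def:uniform_convexity} gives $\phi''(t)t/\phi'(t) \in [p^--1,\, p^+-1]$, so $\phi''(t) \eqsim \phi'(t)/t$ and the two eigenvalues are comparable. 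The analogue for $V$ follows from Remark~\ref{rem:AV}, which identifies $V$ as the stress-tensor of a uniformly convex N-function $\psi$ with $\psi'(t) = \sqrt{\phi'(t)t}$ and $\psi''(t)t/\psi'(t) = \tfrac12(\phi''(t)t/\phi'(t)) + \tfrac12$.

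\medskip
\noindent\textbf{Step 2: Passage to $I(P,Q)$.} When the segment from $Q$ to $P$ avoids the origin I would write
\begin{align*}
    (A(P) - A(Q)){:}(P-Q) &= \int_0^1 DA(P_t)[P-Q]{:}(P-Q)\, dt,
    \\
    V(P) - V(Q) &= \int_0^1 DV(P_t)[P-Q]\, dt,
\end{align*}
and combine with Step~1 and the Cauchy--Schwarz inequality to obtain the equivalence with $I(P,Q)$. The case when the segment passes through $0$ follows by continuity (or by handling $Q=0$ separately via $|V(P)|^2 = \phi'(|P|)|P|$ and $\phi(t) \eqsim \phi''(t)t^2$).

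\medskip
\noindent\textbf{Step 3: The central integral equivalence.} The remaining task is
\begin{align*}
    \int_0^1 \phi''(|P_t|)\, dt \;\eqsim\; \phi''(|P|+|Q|).
\end{align*}
I would split into two regimes. If $|P-Q| \le \tfrac14(|P|+|Q|)$ then $|P_t| \eqsim |P|+|Q|$ uniformly in $t$, and the quasi-monotonicity of $\phi''$, which follows from $\phi''(t) \eqsim \phi'(t)/t$ and the Simonenko-index bounds of Lemma~\ref{lem:simonenko} applied to $\phi'$, delivers the equivalence. If $|P-Q| > \tfrac14(|P|+|Q|)$ then $|P-Q|\eqsim|P|+|Q|$; here I would instead argue directly, using $|V(P)-V(Q)|^2 \lesssim |V(P)|^2 + |V(Q)|^2 \eqsim \phi(|P|) + \phi(|Q|) \eqsim \phi(|P|+|Q|) \eqsim \phi''(|P|+|Q|)(|P|+|Q|)^2$ for the upper bound, together with a matching lower bound obtained by returning to the integral of $\phi''(|P_t|)$ on a subinterval of $[0,1]$ where $|P_t|$ is comparable to $|P|+|Q|$ (e.g.\ $t\in[\tfrac34,1]$ after swapping $P,Q$ so that $|P|\ge|Q|$).

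\medskip
\noindent\textbf{Main obstacle.} The delicate point is Step~3 in the singular range $p^- < 2$, where $\phi''$ may blow up near the origin and the naive pointwise bound $\phi''(|P_t|) \le \phi''(|P|+|Q|)$ fails. The fix is to exploit that $\phi'(s)/s$ is power-like controlled by the Simonenko indices supplied by Lemma~\ref{lem:simonenko}, which makes the integral (but not the pointwise values) of $\phi''(|P_t|)$ well-behaved.
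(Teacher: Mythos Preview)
The paper does not actually prove this lemma; it only cites \cite[Lemma~3]{DieEtt08}. Your sketch reproduces the standard argument from that reference, so in substance you are aligned with what the paper invokes.

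One small point worth making explicit concerns Step~2. The equivalence $(A(P)-A(Q)){:}(P-Q)\eqsim I(P,Q)$ follows directly from the pointwise bounds of Step~1, but for $|V(P)-V(Q)|^2$ only the upper bound comes from Jensen/Cauchy--Schwarz applied to $V(P)-V(Q)=\int_0^1 DV(P_t)[P-Q]\,dt$. The lower bound needs a short additional argument: since by Remark~\ref{rem:AV} the map $V$ is itself the stress tensor of the uniformly convex N-function~$\psi$, your $A$-argument together with Step~3 applied to~$\psi$ yields $(V(P)-V(Q)){:}(P-Q)\eqsim\psi''(|P|+|Q|)\,|P-Q|^2$; then Cauchy--Schwarz and $\psi''(t)^2\eqsim\phi''(t)$ give $|V(P)-V(Q)|^2\gtrsim\phi''(|P|+|Q|)\,|P-Q|^2$. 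All the ingredients for this are already present in your sketch; it would just help to spell out this step.
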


\begin{proof}
	This Lemma goes back to \cite[Lemma 3]{DieEtt08}.
\end{proof}

\subsection{Truncated N-functions}

For  $0 \leq \delta^-\leq \delta^+\leq \infty$ and $\delta = (\delta^-,\delta^+)$ we define the truncated N-function $\phi_\delta$ by
\begin{equation*}
  \phi_\delta'(t)=\phi_{(\delta^-,\delta^+)}^\prime(t)\coloneqq   \frac{\phi^\prime(\delta^-\lor t\land \delta^+)}{\delta^-\lor t \land \delta^+}\,t.
\end{equation*}
These functions were first introduced in~\cite{DieForTom20}. If $\delta^+=\infty$, then they are related to the shifted N-functions introduced in \cite{DieEtt08}.
\begin{lemma}\label{lem:index_of_phi_delta}
  Let $\phi$ be a uniformly convex N-function with indices $p^-$ and $p^+$ and $0<\delta^-\leq \delta^+<\infty$. Then
  \begin{enumerate}
  \item \label{itm:index_of_phi_delta1} $\phi_\delta$ is a uniformly convex N-function with quadratic growth, see Definition~\ref{def:phi-quadratic}.
  \item \label{itm:index_of_phi_delta2} The lower index of $\phi_\delta$ is $p^- \land 2$, and the upper index of $\phi_\delta$ is $p^+\lor 2$.
  \item \label{itm:index_of_phi_delta3} $\phi^\prime(t)=\phi^\prime_\delta(t)$ and $\phi^{\prime\prime}(t)=\phi^{\prime\prime}_\delta(t)$ for all $t\in (\delta^-,\delta^+)$.
  \item \label{itm:index_of_phi_delta5} We have
    \begin{equation*}
      (\phi_{\delta})^\ast =  (\phi^\ast)_{\phi^\prime (\delta^-),\phi^\prime(\delta^+)}.
    \end{equation*}
  \item \label{itm:index_of_phi_delta4} For $t\in [0,\delta^+]$ we have
    \begin{equation*}
      \abs{\phi(t) - \phi_\delta(t)} \leq 2^{p^+-1}\phi(\delta^-).
    \end{equation*}
  \end{enumerate}
\end{lemma}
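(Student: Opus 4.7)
The plan is to verify the five items by exploiting the explicit piecewise form of $\phi'_\delta$ inherited from its definition. Writing out the cases, $\phi'_\delta$ is linear with slope $\phi'(\delta^-)/\delta^-$ on $[0,\delta^-]$, coincides with $\phi'$ on $[\delta^-,\delta^+]$, and is linear with slope $\phi'(\delta^+)/\delta^+$ on $[\delta^+,\infty)$. These three branches match continuously at $\delta^\pm$, and item~(3) falls out of the middle branch immediately.

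For (1) and (2) I would differentiate once more: $\phi''_\delta$ is the constant $\phi'(\delta^-)/\delta^-$ on $(0,\delta^-)$, agrees with $\phi''$ on $(\delta^-,\delta^+)$, and is the constant $\phi'(\delta^+)/\delta^+$ on $(\delta^+,\infty)$. Since $\phi''>0$ is continuous on the compact subinterval $[\delta^-,\delta^+]\subset(0,\infty)$, it is pinched between positive constants there; together with the two constant regimes this gives the quadratic growth of~(1). For the indices I compute the ratio $\phi''_\delta(t)t/\phi'_\delta(t)$ branch by branch: it equals $1$ on each linear regime (contributing index~$2$) and lies in $[p^- -1,\,p^+ -1]$ on the middle one. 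Taking infimum and supremum then produces the indices $p^-\wedge 2$ and $p^+\vee 2$.

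For (4) I would invert $\phi'_\delta$ using $(\phi^*)'=(\phi')^{-1}$ and compare with the piecewise formula obtained by applying the definition of truncated N-function to $\phi^*$ with parameters $\alpha\coloneqq \phi'(\delta^-)$, $\beta\coloneqq \phi'(\delta^+)$. In both computations the derivative equals $s\delta^-/\alpha$ for $s\leq\alpha$, equals $(\phi')^{-1}(s)$ for $\alpha\leq s\leq\beta$, and equals $s\delta^+/\beta$ for $s\geq\beta$. Since $(\phi_\delta)^*$ and $(\phi^*)_{\alpha,\beta}$ both vanish at $0$, agreement of derivatives yields equality of the functions.

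Finally, for (5) I would compare $\phi$ and $\phi_\delta$ directly in each regime. On $[\delta^-,\delta^+]$ the derivatives agree, hence $\phi(t)-\phi_\delta(t)=\phi(\delta^-)-\phi_\delta(\delta^-)$ is constant. On $[0,\delta^-]$, monotonicity gives $\phi(t)\leq\phi(\delta^-)$, while $\phi_\delta(t)\leq\phi_\delta(\delta^-)=\phi'(\delta^-)\delta^-/2\leq (p^+/2)\phi(\delta^-)$ by Lemma~\ref{lem:simonenko}. Combining both regimes one gets $|\phi(t)-\phi_\delta(t)|\leq \max(1,p^+/2)\phi(\delta^-)$, and the elementary inequality $\max(1,p^+/2)\leq 2^{p^+-1}$ for $p^+\geq 1$ delivers the stated bound. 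No step is genuinely hard; the main obstacle is just clean bookkeeping of the three branches, and ensuring in~(5) that the constant is absorbed into $2^{p^+-1}$ via Lemma~\ref{lem:simonenko}.
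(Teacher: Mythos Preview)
Your proposal is correct and follows essentially the same approach as the paper: compute $\phi_\delta''$ and the ratio $\phi_\delta''(t)t/\phi_\delta'(t)$ piecewise to obtain (1)--(3), invert $\phi_\delta'$ branch by branch for (4), and handle (5) by an elementary comparison on $[0,\delta^-]$ together with the fact that $\phi-\phi_\delta$ is constant on $[\delta^-,\delta^+]$. The only cosmetic difference is in (5), where the paper bounds $\int_0^t|\phi'-\phi_\delta'|\,ds$ directly rather than bounding $\phi$ and $\phi_\delta$ separately; both routes are equally short and yield the same constant.
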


\begin{proof}
  By the definition of $\phi_\delta$ we have
  \begin{equation*}
    \phi_\delta^{\prime \prime}(t)=
    \begin{cases}
      \tfrac{\phi^\prime(\delta^-)}{\delta^-} &\qquad \text{for }t\leq \delta^-,
      \\
      \phi^{\prime\prime}(t) &\qquad \text{for }t\in (\delta^-,\delta^+),
      \\
      \tfrac{\phi^\prime(\delta^+)}{\delta^+} &\qquad \text{for }t\geq \delta^+.
    \end{cases}
  \end{equation*}
  and thus
  \begin{equation*}
    \dfrac{\phi_\delta^{\prime \prime}(t)t}{\phi_\delta^\prime(t)}+1=
    \begin{cases}
      2 &\qquad\text{for } t\leq \delta^-,
      \\
      \dfrac{\phi^{\prime\prime}(t)t}{\phi^\prime(t)}+1 &\qquad\text{for }t\in (\delta^-,\delta^+),
      \\
        2 &\qquad\text{for }t\geq \delta^+.
      \end{cases}
  \end{equation*}
  These observations readily imply \ref{itm:index_of_phi_delta1}--\ref{itm:index_of_phi_delta3}.

  To see \ref{itm:index_of_phi_delta5} we note that
  \begin{alignat*}{2}
    [(\phi^\ast)_{\phi^\prime (\delta^-),\phi^\prime(\delta^+)}]^\prime(t) &=\dfrac{\phi^{\ast\prime}(\phi^\prime(\delta^-)\lor t\land \phi^\prime(\delta^+))}{\phi^\prime(\delta^-)\lor t \land \phi^\prime(\delta^+)}\,t
    \\
    &=\dfrac{\delta^-\lor \phi^{\ast \prime}(t)\land \delta^+}{\phi^\prime(\delta^-)\lor t \land \phi^\prime(\delta^+)}\,t = \big((\phi_\delta)'\big)^{-1}(t) = \big((\phi_\delta)^*\big)'(t).
  \end{alignat*}
  This proves~\ref{itm:index_of_phi_delta5}.
  
  It remains to prove~\ref{itm:index_of_phi_delta4}.
  If $t \in [0,\delta^+]$, then
  \begin{align*}
    \abs{\phi(t) - \phi_\delta(t)}
    &\leq \int_0^t \abs{\phi'(s) - \phi_\delta'(s)}\,ds
    \\
    &\leq \tfrac 12 \delta^- \phi'(\delta^-) \leq \tfrac 12 \phi(2 \delta^-) \leq 2^{1-p^+} \phi(\delta^-).
  \end{align*}
  This proves the last claim.
\end{proof}

\begin{lemma}\label{lem:uniform_embedding}
	Let $\phi$ be a uniformly convex N-function with indices $p^-$ and $p^+$ and $0<\delta^- \leq 1 \leq \delta^+<\infty$. Then 
	\begin{equation*}
    L^{p^-}(\Omega) \cap
    L^{p^+}(\Omega) \cap L^2(\Omega) \embedding L^{\phi_\delta}(\Omega)
    \embedding L^{p^-}(\Omega) +
    L^{p^+}(\Omega) + L^2(\Omega)
	\end{equation*}
  with embedding constants uniformly bounded with respect to~$\delta$.
\end{lemma}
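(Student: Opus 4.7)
The plan is to reduce the assertion to a uniform pointwise two-sided bound on $\phi_\delta$ and then integrate. Because $\Omega$ is bounded, the Lebesgue spaces on $\Omega$ are totally ordered by their exponent, so
\begin{equation*}
L^{p^-}(\Omega) \cap L^{p^+}(\Omega) \cap L^2(\Omega) = L^{p^+ \vee 2}(\Omega)
\end{equation*}
and
\begin{equation*}
L^{p^-}(\Omega) + L^{p^+}(\Omega) + L^2(\Omega) = L^{p^- \wedge 2}(\Omega)
\end{equation*}
with equivalence constants depending only on $|\Omega|$ and the indices. By Lemma~\ref{lem:index_of_phi_delta}\ref{itm:index_of_phi_delta2}, the Orlicz function $\phi_\delta$ has indices $p^- \wedge 2$ and $p^+ \vee 2$, which are independent of $\delta$. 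So the task reduces to showing $L^{p^+ \vee 2}(\Omega) \hookrightarrow L^{\phi_\delta}(\Omega) \hookrightarrow L^{p^- \wedge 2}(\Omega)$ with $\delta$-uniform constants.

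The step I expect to be the most delicate is the uniform estimate $\phi_\delta(1) \eqsim \phi(1)$, with constants independent of $\delta$. This is precisely where the assumption $\delta^- \leq 1 \leq \delta^+$ enters. By Lemma~\ref{lem:index_of_phi_delta}\ref{itm:index_of_phi_delta3} one has $\phi_\delta' = \phi'$ on $[\delta^-, 1]$, and direct computation yields
\begin{equation*}
  \phi_\delta(1) = \tfrac{1}{2}\phi'(\delta^-)\delta^- + \phi(1) - \phi(\delta^-).
\end{equation*}
Combining the elementary bound $\phi(\delta^-) \leq \phi'(\delta^-)\delta^- \leq p^+\phi(\delta^-)$ (from $\phi(t) \leq \phi'(t)t$ together with Lemma~\ref{lem:simonenko}) with the monotonicity estimate $\phi(\delta^-) \leq \phi(1)$ then gives $\tfrac{1}{2}\phi(1) \leq \phi_\delta(1) \leq \tfrac{p^+}{2}\phi(1)$, as desired.

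Once the uniform bound on $\phi_\delta(1)$ is in place, I would apply Lemma~\ref{lem:simonenko} to $\phi_\delta$ itself (using its $\delta$-independent indices) and specialize to $s = 1$ to obtain
\begin{equation*}
c\,\min(t^{p^-\wedge 2},t^{p^+\vee 2}) \leq \phi_\delta(t) \leq C\,\max(t^{p^-\wedge 2},t^{p^+\vee 2}) \qquad \text{for all } t \geq 0,
\end{equation*}
with $c,C$ independent of $\delta$. From here the embeddings follow from routine Luxembourg-norm manipulations on the bounded domain $\Omega$: the upper bound yields $\phi_\delta(t) \leq C(1 + t^{p^+ \vee 2})$ and hence $\|f\|_{L^{\phi_\delta}} \lesssim \|f\|_{L^{p^+\vee 2}}$, while the lower bound yields $t^{p^-\wedge 2} \leq 1 + c^{-1}\phi_\delta(t)$ and hence $\|f\|_{L^{p^-\wedge 2}} \lesssim \|f\|_{L^{\phi_\delta}}$, with constants depending only on $|\Omega|$, $\phi(1)$, and the indices of $\phi$.
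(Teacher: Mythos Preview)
Your approach is essentially the paper's: both apply Lemma~\ref{lem:simonenko} to $\phi_\delta$ with its $\delta$-independent indices $p^-\wedge 2$, $p^+\vee 2$ and identify $\phi_\delta(1)\eqsim\phi(1)$ as the key $\delta$-uniform ingredient; the paper obtains the latter more quickly via $\phi_\delta'(1)=\phi'(1)$ combined with the Simonenko relation $\phi(t)\eqsim\phi'(t)t$, whereas you compute $\phi_\delta(1)$ by direct integration. Two small remarks: your stated upper constant $\tfrac{p^+}{2}$ is only correct for $p^+\geq 2$ (when $p^+<2$ your own computation gives $\phi_\delta(1)\leq\phi(1)$, so the bound should read $\max(1,\tfrac{p^+}{2})\phi(1)$), and the lemma as stated does not assume $\Omega$ bounded, so strictly speaking the reduction to $L^{p^+\vee 2}$ and $L^{p^-\wedge 2}$ is an extra hypothesis---the paper's pointwise bound yields the embeddings into the genuine intersection and sum spaces directly, though in all applications $\Omega$ is indeed bounded.
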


\begin{proof}
  It follows from Lemma~\ref{lem:simonenko} and Lemma~\ref{lem:index_of_phi_delta}~\ref{itm:index_of_phi_delta2} that
  \begin{align*}
			\min \set{t^{p^+},t^{p^-},t^2}\phi_\delta(1)&\leq \phi_\delta(t)\leq \max  \set{t^{p^+},t^{p^-},t^2} \phi_\delta(1) \quad \text{for all $t \geq 0$.}
  \end{align*}
  Note that using $1 < 2 \wedge p^- \leq 2 \vee p^+ < \infty$ we calculate
  \begin{align*}
    \phi_\delta (1)\eqsim 1\cdot \phi_\delta'(1)=\phi'(1)\eqsim \phi(1).
  \end{align*}
  Hence,
  \begin{align*}
    \min \set{t^{p^+},t^{p^-},t^2}\phi(1)&\lesssim \phi_\delta(t)\lesssim \max  \set{t^{p^+},t^{p^-},t^2} \phi(1) \quad \text{for all $t \geq 0$.}
  \end{align*}
  This proves the claim.
\end{proof}

For $Q \in \RRnn$ define
\begin{equation}
	A_\delta (Q)\coloneqq \phi_\delta^\prime(|Q|)\dfrac{Q}{|Q|} \qquad \text{and} \qquad V_\delta(Q)\coloneqq  \sqrt{\phi^\prime_\delta(|Q|)|Q|}\dfrac{Q}{|Q|}.
\end{equation}

\begin{remark}
  \label{rem:AVdelta}
  In the sense of Remark~\ref{rem:AV} $A_\delta$ is induced by $\phi_\delta$  and $V_\delta$ is induced by~$\psi_\delta$ (the truncation of~$\psi$) and $\psi'_\delta(t) = \sqrt{\phi_\delta'(t) t}$. Note that also the $\psi_\delta$ are uniformly convex N-functions by Lemma~\ref{lem:index_of_phi_delta}.
\end{remark}

The following continuity is inspired by~\cite{BerDieRuz10}.

\begin{lemma}\label{lem:continuities}
	Let $\phi$ be a uniformly convex N-function. Then 
  $(\delta^-,\delta^+,t)\mapsto \phi^\prime_\delta (t)$ is continuous on $[0,1]\times [1,\infty] \times \setR^{\geq 0}$. Furthermore, the mapping $(\delta^-,\delta^+,Q)\mapsto A_{\delta}(Q)$ is continuous $[0,1]\times [1,\infty] \times \RRnn$. The same holds true if we replace $A_\delta$ with $A^{-1}_\delta$, $V_\delta$ or $V_\delta^{-1}$.
\end{lemma}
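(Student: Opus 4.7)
The plan is to reduce everything to joint continuity of the scalar function $\phi'_\delta$ and then lift to the matrix-valued maps by composition; the inverses $A_\delta^{-1}$ and $V_\delta^{-1}$ I would handle by a conjugation trick via Lemma~\ref{lem:index_of_phi_delta}\ref{itm:index_of_phi_delta5}.

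First I would prove continuity of $(\delta^-,\delta^+,t)\mapsto \phi'_\delta(t)$. Set $u(\delta^-,\delta^+,t) := \delta^- \vee t \wedge \delta^+$ with the convention $t\wedge\infty := t$. Then $u$ is jointly continuous on $[0,1]\times[1,\infty]\times[0,\infty)$ and $\phi'_\delta(t) = \phi'(u)\,t/u$ whenever $u>0$, which yields continuity at every such point by composition with the continuous map $\phi'$. Continuity at $\delta^+=\infty$ is automatic, since for every fixed $t$ the value $\phi'_\delta(t)$ is independent of $\delta^+$ as soon as $\delta^+>t$. The only delicate point is $(\delta^-,t)=(0,0)$; there I would rely on the pointwise bound
\begin{equation*}
\phi'_\delta(t) \leq \phi'(\delta^- \vee t),
\end{equation*}
which one checks in the three regions $t\leq\delta^-$, $\delta^-\leq t\leq\delta^+$, $t\geq\delta^+$, using for the last one that $s\mapsto \phi'(s)/s$ is non-decreasing (it averages the monotone $\phi''$). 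Since $\phi'$ is continuous and $\phi'(0)=0$, this gives $\phi'_\delta(t)\to 0$ as $\delta^-\vee t\to 0$, matching the value $\phi'_{(0,\delta^+)}(0)=0$.

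Next, the continuity of $A_\delta$ and $V_\delta$ falls out of the scalar case. For $Q\neq 0$ one writes $A_\delta(Q)=\phi'_\delta(|Q|)\,Q/|Q|$ and composes with the continuous radial projection $Q\mapsto Q/|Q|$. At $Q=0$ we have $A_\delta(0)=0$ and $|A_\delta(Q)| = \phi'_\delta(|Q|)\to 0$ jointly by the scalar continuity at $t=0$. The argument for $V_\delta$ is identical via $|V_\delta(Q)|=\sqrt{\phi'_\delta(|Q|)\,|Q|}$, which is likewise jointly continuous in all three parameters.

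Finally, for the inverses I would exploit Lemma~\ref{lem:index_of_phi_delta}\ref{itm:index_of_phi_delta5}, which gives $(\phi_\delta)^*=(\phi^*)_{\phi'(\delta^-),\phi'(\delta^+)}$. Since $(\phi'_\delta)^{-1} = ((\phi_\delta)^*)'$, this produces the formula
\begin{equation*}
A_\delta^{-1}(P) = \bigl((\phi^*)_{\tilde\delta^-,\tilde\delta^+}\bigr)'(|P|)\,\frac{P}{|P|}, \qquad \tilde\delta^- := \phi'(\delta^-),\ \tilde\delta^+ := \phi'(\delta^+).
\end{equation*}
As $\phi^*$ is itself uniformly convex (Lemma~\ref{lem:uniformly_convex_dual}), the two previous steps apply verbatim to $\phi^*$ on the parameter rectangle $[0,\phi'(1)]\times[\phi'(1),\infty]$, so the right-hand side is jointly continuous in $(\tilde\delta^-,\tilde\delta^+,P)$. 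The map $(\delta^-,\delta^+)\mapsto(\tilde\delta^-,\tilde\delta^+)$ is continuous $[0,1]\times[1,\infty]\to[0,\phi'(1)]\times[\phi'(1),\infty]$ because $\phi'$ is continuous and extends by $\phi'(\infty)=\infty$, so composition closes the argument. For $V_\delta^{-1}$ I would repeat the argument with $\phi$ replaced by the uniformly convex N-function $\psi$ from Remark~\ref{rem:AV}, invoking Remark~\ref{rem:AVdelta} to identify $V_\delta$ with the map induced by $\psi_\delta$. The main obstacle in all of this will be the bookkeeping at the degenerate corners $\delta^-=0$ and $\delta^+=\infty$; these are handled respectively by the pointwise bound $\phi'_\delta(t)\leq\phi'(\delta^-\vee t)$ and by the eventual independence of $\phi'_\delta(t)$ on $\delta^+$. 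Everything else is standard composition of continuous maps.
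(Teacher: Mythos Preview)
Your proof is correct and follows essentially the same route as the paper: write $\phi'_\delta(t)=\phi'(\delta^-\vee t\wedge\delta^+)\,t/(\delta^-\vee t\wedge\delta^+)$ for joint continuity of the scalar map, lift to $A_\delta$ and $V_\delta$ via the radial formula, and handle the inverses through the conjugation identity $(\phi_\delta)^*=(\phi^*)_{\phi'(\delta^-),\phi'(\delta^+)}$ together with Remark~\ref{rem:AVdelta}; your treatment of the corner $(\delta^-,t)=(0,0)$ and of $\delta^+=\infty$ is in fact more explicit than the paper's.

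One inessential slip: the claim that $s\mapsto\phi'(s)/s$ is non-decreasing because it ``averages the monotone $\phi''$'' is false in general, since $\phi''$ is not assumed monotone (take $\phi(t)=t^p+t^q$ with $p<2<q$). This does not affect your argument, however: the bound $\phi'_\delta(t)\leq\phi'(\delta^-\vee t)$ is only needed near $(\delta^-,t)=(0,0)$, where $t<1\leq\delta^+$ and the third region $t\geq\delta^+$ never arises, so you can simply drop that case.
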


\begin{proof}
  Recall that $\phi'$ is continuous and that
  \begin{align*}
    \phi_\delta'(t)=\phi_{(\delta^-,\delta^+)}'(t)= \phi'(\delta^-\lor t\land \delta^+) \frac{t}{\delta^-\lor t \land \delta^+}.
  \end{align*}
  This proves that $(\delta^-,\delta^+,t) \mapsto \phi_\delta'(t)$ is continuous on $[0,1] \times [1,\infty] \times [0,\infty)$. Analogously
  \begin{align*}
    A_\delta(Q) = \phi'(\delta^-\lor \abs{Q}\land \delta^+) \frac{Q}{\delta^-\lor \abs{Q} \land \delta^+}
  \end{align*}
  shows that $A_\delta(Q)$ is continuous. Using $(\phi_\delta)^* = (\phi^*)_{\phi'(\delta)}$ it follows that $A_\delta^{-1}(Q)$ is continuous. The continuity of~$V_\delta(Q)$ and $V_\delta^{-1}(Q)$ follows by Remark~\ref{rem:AVdelta}.
\end{proof}

\printbibliography
\end{document}